\theoremstyle{plain}
\newtheorem{Theorem}{Theorem}
\newtheorem{Corollary}{Corollary}
\newtheorem{Lemma}{Lemma}
\newtheorem{Proposition}{Proposition}
\theoremstyle{Definition}
\theoremstyle{Remark}
\newtheorem{Remark}{Remark}
\numberwithin{equation}{section}
\def\Z{\mathbb{Z}}                 
\def\N{\mathbb{N}}                   
\def\F{{\cal F}}                      
\def\fa{{\mathcal{F}}}
\def\O{{\mathcal{O}}}
\def\po{{\partial}}
\def\vr{{\varphi}}
\def\ga{{\gamma}}
\def\ov{\overline}
\def\al{{\alpha}}
\def\re{{\mathbb{R}}}
\def\bd{{\mathbb{D}}}
\def\bc{{\mathbb{C}}}
\begin{document}
\title{ $\mathbb{C^*}$- Actions on Stein analytic spaces  with isolated singularities}
\author{C. Camacho,  H. Movasati and B. Sc\'ardua}

\date{}
{\tiny
\maketitle
}


\section{Introduction}
\label{Section:intro}

Let $V$ be an irreducible complex analytic space of dimension two
with normal singularities and $\vr:\mathbb{C^*}\times V\to V$ a
holomorphic action of the group $\mathbb{C^*}$ on $V$. Denote by
$\fa_\vr$ the foliation on $V$ induced by $\vr$. The leaves of
this foliation are the one-dimensional orbits of $\vr$.
We will assume that there exists  a \emph{dicritical} singularity
$p\in V$ for the $\bc^*$-action, i.e. for some neighborhood $p\in
W\subset V$ there are infinitely many leaves of $\mathcal
{F}_\vr|_{W}$ accumulating only at $p$. The closure of such a local
leaf is an invariant local analytic curve called a \emph{separatrix}
of $\mathcal{F}_\vr$ through $p$. In \cite{Orlik} Orlik and Wagreich
studied the $2$-dimensional affine algebraic varieties embedded in
$\mathbb{C}^{n+1}$, with an isolated singularity at the origin, that
are invariant by an effective action of the form
\, $\sigma_Q(t,(z_{0},...,z_{n}))=(t^{q_{0}}z_{0},..., t^{q_{n}}z_{n})$
where $Q=(q_0,...,q_n) \in\mathbb N^{n+1}$, i.e. all $q_{i}$ are positive
integers. Such actions are called \emph{good} actions.
In particular they classified the algebraic surfaces embedded
in $\mathbb{C}^{3}$ endowed with such an action. It is easy to see
that any good action on a surface embedded in $\mathbb{C}^{n+1}$ has
a dicritical singularity at $0\in\mathbb{C}^{n+1}$.
Conversely, it is the purpose of this paper to show that good actions
are the models for analytic $\mathbb{C^*}$-actions on Stein analytic
spaces of dimension two with a dicritical singularity.
In this paper all spaces are connected and complex analytic.

\begin{Theorem}
\label{Theorem:main}
  Let $V$ be a normal Stein analytic space of
dimension two and $\vr$ a $\bc^*$-action on $V$ with at least  one
dicritical singularity $p\in V$. There is an embedding
$h:V\to\mathbb{C}^{n+1}$, for some $n$, onto an algebraic
subvariety ${\mathcal V}:=h(V)$ and a good action $\sigma_Q$ on
$\mathbb{C}^{n+1}$, leaving ${\mathcal V}$ invariant and
analytically conjugate to $\vr$, i.e.,
$$
h(\vr(t,x))=\sigma_Q(t,h(x)),\ \forall x\in V,\ t\in\bc^*.
$$
\end{Theorem}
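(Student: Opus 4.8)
The plan is to produce the desired equivariant embedding by exhibiting enough $\mathbb{C}^*$-semi-invariant holomorphic functions on $V$, after which algebraicity of the image comes for free. A $\mathbb{C}^*$-action makes $A:=\mathcal{O}(V)$ a graded Fr\'echet algebra: for $f\in A$ and $k\in\Z$ put $f_k(x)=\tfrac{1}{2\pi i}\int_{|t|=1}t^{-k-1}f(\varphi(t,x))\,dt$; then $f_k$ is holomorphic, $f_k(\varphi(t,x))=t^kf_k(x)$, and $f=\sum_{k}f_k$ converges in $A$. Write $A_k$ for the space of degree-$k$ semi-invariants, so that $R:=\bigoplus_{k\in\Z}A_k\subset A$ is a graded subring whose completion is $A$. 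Suppose we show that (i) $A_k=0$ for $k<0$ and $A_0=\mathbb{C}$; (ii) $R$ is a finitely generated $\mathbb{C}$-algebra; and (iii) finitely many homogeneous generators $f_0,\dots,f_n$ of $R$ already separate points of $V$ and span every cotangent space. Then $h:=(f_0,\dots,f_n)\colon V\to\mathbb{C}^{n+1}$ is a closed holomorphic embedding intertwining $\varphi$ with the diagonal action $\sigma_Q$ of weights $Q=(\deg f_0,\dots,\deg f_n)$, and $\sigma_Q$ is a \emph{good} action precisely because all $\deg f_i>0$. Finally $\mathcal{V}:=h(V)$ is a closed $\sigma_Q$-invariant analytic subset of $\mathbb{C}^{n+1}$; projecting $\mathbb{C}^{n+1}\setminus\{0\}$ onto the weighted projective space $\mathbb{P}(Q)$ carries $\mathcal{V}\setminus\{0\}$ onto a compact analytic subvariety, algebraic by Chow's theorem, so that $\mathcal{V}$, being its weighted affine cone, is algebraic. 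Everything thus reduces to (i)--(iii).

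The local picture at $p$ is standard: the $\mathbb{C}^*$-action on the finite-dimensional Zariski cotangent space $\mathfrak{m}_p/\mathfrak{m}_p^2$ is linear and diagonalizable, and lifting a semi-invariant basis to generators of $\mathfrak{m}_p$ realizes the germ $(V,p)$ as a $\mathbb{C}^*$-invariant germ in some $(\mathbb{C}^m,0)$ on which $\varphi$ is the diagonal action of a weight vector $Q'$ and whose defining ideal is homogeneous. Dicriticalness forbids zero or mixed-sign weights in $Q'$: a zero weight would give a curve of fixed points through $p$, with nearby leaves accumulating along it rather than only at $p$; mixed signs would make generic orbit closures near $p$ unbounded in $\mathbb{C}^m$, again contradicting accumulation only at $p$. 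Hence, in suitable coordinates, $\varphi$ is a good action near $p$; in particular $p$ is an isolated fixed point that attracts, as $t\to0$, every point of some neighborhood $W$, and $W\setminus\{p\}\to(W\setminus\{p\})/\mathbb{C}^*$ is a Seifert $\mathbb{C}^*$-fibration over a disk-like orbifold curve.

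The crux is the globalization. One must prove that $p$ is the unique fixed point, that the isotropy on $V\setminus\{p\}$ is everywhere finite, and---most importantly---that $\lim_{t\to0}\varphi(t,x)=p$ for \emph{every} $x\in V$, so that $C:=(V\setminus\{p\})/\mathbb{C}^*$ is a \emph{compact} orbifold Riemann surface and $V$ is obtained from a Seifert $\mathbb{Q}$-line bundle $L$ over $C$ by contracting its zero section. Here the Stein hypothesis is essential: the attracting basin $B$ of $p$ is open, $\mathbb{C}^*$-invariant and contains $W$, and to force $B=V$ one plays this invariance against the properness of the $\mathbb{R}_{>0}$-flow toward $p$, controlled by a strictly plurisubharmonic exhaustion of $V$---without Steinness the statement is false, e.g. for the translation action of $\mathbb{C}^*$ on $\mathbb{C}^*\times(\text{a curve})$, which has no dicritical point at all. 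Once the cone structure is in hand, $A_k\cong H^0(C,\mathcal{O}_C(\lfloor kD\rfloor))$ for an ample $\mathbb{Q}$-divisor $D$ on $C$ representing $L^{-1}$ (ampleness being equivalent to holomorphic convexity of the contraction, i.e. to $V$ being Stein), whence $A_k=0$ for $k<0$ and $A_0=\mathbb{C}$, and $R=\bigoplus_{k\ge0}A_k$ is finitely generated because $D$ is ample on the projective curve $C$. Standard Stein theory (Cartan's Theorems A and B on $V$, together with the structure of $R$) then shows that a finite sub-family of the $A_k$ separates points and cotangent vectors, establishing (i)--(iii).

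I expect the globalization step---showing the dicritical point is a global attractor with compact orbit space---to be the main obstacle, since it is the one place where the non-compact, Stein nature of $V$ must be brought to bear against an a priori purely local hypothesis; by contrast the local normal form and the descent from the graded ring to algebraicity are comparatively routine.
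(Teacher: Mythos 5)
Your route (equivariant local normal form at $p$, global attraction, graded ring of semi-invariants $R=\bigoplus_k A_k$, embedding by homogeneous generators, Chow in weighted projective space) is genuinely different from the paper's, which never works with $\mathcal{O}(V)$ directly: it resolves $p$, shows the exceptional divisor is a star with a unique dicritical component $\sigma_0$, linearizes the $\mathbb{C}^*$-action in a whole neighborhood of $\sigma_0$ by constructing semi-invariant meromorphic $1$-forms, builds an explicit linear model proved affine via Hartshorne's ampleness/contraction criterion, and only then identifies $V$ with that model. Your plan, if completed, would be closer in spirit to Orlik--Wagreich and would bypass the resolution; but as written it does not close.

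The gap is exactly where you predict it: the assertion that every orbit accumulates at $p$, i.e. that the attraction basin $B_p$ is all of $V$. Every one of your steps (i)--(iii) — the vanishing $A_k=0$ for $k<0$ and $A_0=\mathbb{C}$, the compactness of the orbit space $C$, the identification $A_k\cong H^0(C,\mathcal{O}_C(\lfloor kD\rfloor))$ with $D$ ample, finite generation, and the closedness of the image in weighted projective space needed for Chow — is conditional on this, and for it you offer only the heuristic of ``playing the invariance of $B_p$ against the properness of the flow controlled by a plurisubharmonic exhaustion.'' That is not an argument: a psh exhaustion gives no control on the $t\to 0$ limits of orbits, and a priori $\partial B_p$ could be a nonempty $\varphi$-invariant hypersurface separating $B_p$ from other dynamics. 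The paper's treatment of this step (Section 5) is the technical core of the whole proof: it invokes Suzuki's theorem that $\mathcal{F}_\varphi$ admits a meromorphic first integral on a Stein space, uses that a closed leaf in a Stein space is the zero locus of a global holomorphic function to run a period argument with a globally extended $1$-form ruling out closed leaves in $\partial B_p$, shows any remaining boundary leaf would have to close up onto a nondicritical fixed point, and finally eliminates these using $H^1(B_p,\mathbb{R})=0$ — a fact available only because $B_p$ has already been identified with the linear model via the linearization theorem. None of this machinery appears in, or is replaced by, your sketch, so the proposal is a plausible program rather than a proof. (The local normal form at $p$ and the descent from the graded ring to algebraicity are, as you say, comparatively routine, modulo a careful Bochner--Cartan linearization of the $S^1$-action on the singular germ.)
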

Notice that this theorem implies that there is no other
singularity of $\vr$ apart from $p\in V$. The above theorem can be
considered as a GAGA principle for Stein varieties with $\bc^*$-actions.
This answers a question posed by some authors (see for instance the
comments after Proposition 1.1.3 in \cite{Orlik} and references there).

\begin{Corollary}
\label{Corollary:smooth} Let $V$ be a  smooth Stein surface
endowed with a $\bc^*$-action having a  dicritical singularity at
$p\in V$. Then $V$ is biholomorphic to  $\bc^2$.
\end{Corollary}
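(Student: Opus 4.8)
The plan is to derive the corollary directly from the Main Theorem, reducing it to the statement that a smooth affine surface carrying a good $\bc^*$-action is isomorphic to $\bc^2$. First I would apply Theorem~\ref{Theorem:main}: there is a biholomorphism $h$ identifying $V$ with an irreducible algebraic subvariety $\mathcal V=h(V)\subset\bc^{n+1}$, invariant under a good action $\sigma_Q(t,z)=(t^{q_0}z_0,\dots,t^{q_n}z_n)$ with all $q_i>0$, and conjugating $\vr$ to $\sigma_Q|_{\mathcal V}$. Since $V$ is smooth, $\mathcal V$ is a smooth affine surface. As $\mathcal V$ is Zariski closed, nonempty and $\sigma_Q$-invariant, and since $\sigma_Q(t,z)\to 0$ as $t\to0$ for every $z$ (all weights are positive), one gets $0\in\mathcal V$; moreover $0$ is the only fixed point of $\sigma_Q|_{\mathcal V}$ and it coincides with $h(p)$, which by hypothesis is a smooth point of $\mathcal V$.

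Next I would exploit the graded coordinate ring. The weighted action turns $A:=\O(\mathcal V)$ into a non-negatively graded finitely generated $\bc$-algebra $A=\bigoplus_{d\ge0}A_d$, where $f\in A_d$ iff $f\circ\sigma_Q(t,\cdot)=t^d f$. An element of $A_0$ is a $\sigma_Q$-invariant regular function, hence constant on every orbit closure; since each orbit closure in $\mathcal V$ contains $0$ and $\mathcal V$ is connected, $f(z)=f(0)$ for all $z$, so $A_0=\bc$. Thus the maximal ideal of the point $0$ is the irrelevant ideal $\mathfrak m=\bigoplus_{d\ge1}A_d$. Smoothness of the surface $\mathcal V$ at $0$ gives $\dim_\bc\mathfrak m/\mathfrak m^2=2$; choose homogeneous $x,y\in\mathfrak m$ whose classes form a basis of $\mathfrak m/\mathfrak m^2$. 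By the graded Nakayama lemma, $x$ and $y$ generate $A$ as a $\bc$-algebra, so the induced morphism $\mathcal V\to\bc^2$ is a closed embedding. Since $\mathcal V$ is irreducible of dimension two and $\bc^2$ is irreducible of dimension two, this closed embedding is an isomorphism, whence $\mathcal V\cong\bc^2$ and $V$ is biholomorphic to $\bc^2$. (As a by-product the action becomes the linear one $t\cdot(x,y)=(t^{a}x,t^{b}y)$ with $a=\deg x$, $b=\deg y$; alternatively one may simply quote the Orlik--Wagreich classification \cite{Orlik}, in which the only smooth normal affine surface with a good action is $\bc^2$.)

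Essentially all the content is packed into Theorem~\ref{Theorem:main}; beyond invoking it, the only genuine step is the commutative-algebra argument, and within it the point I expect to be the main (if mild) obstacle is checking $A_0=\bc$ — that is, that the induced action on $\mathcal V$ has a single attracting fixed point with no invariant functions other than constants — together with the passage from homogeneous generators of $\mathfrak m/\mathfrak m^2$ to algebra generators of $A$. Everything else is formal.
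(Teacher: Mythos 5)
Your proposal is correct and follows essentially the same route as the paper: both reduce the corollary via Theorem~\ref{Theorem:main} to the claim that a smooth affine surface with a good $\bc^*$-action is isomorphic to $\bc^2$. The paper disposes of that claim in one line (``$\mathcal V$ is a quasi-homogeneous non-singular algebraic surface \dots\ therefore it is a graph, hence equivalent to an affine plane''), whereas your graded-Nakayama argument on the coordinate ring is a more careful rendering of the same final step.
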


The proof of Theorem \ref{Theorem:main} will also provide a proof
of the following:

\begin{Theorem}
\label {Theorem:main2}
 The moduli space of pairs $(V,\vr),\ dim(V)=2$, with
at least  one dicritical
singularity for $\vr$  as in Theorem \ref{Theorem:main}, is
the following data
\begin{enumerate}
 \item
A Riemann surface $\sigma_0$ of genus $g$ and $s$-points
$r_1, r_2,\ldots, r_s$ on $\sigma_0$ considered up to the
automorphism group of $\sigma_0$.
\item
A line  bundle $L$ on $\sigma_0$ with $c(L)=-k\leq -1$.
\item
For each $i=1,2,\ldots,s$ a sequence of integers
$-k_{j}^i, \ j=1,2,\ldots, n_i,\ k_{j}^i\geq 2$, such that
$$
\sum_{i=1}^s\frac{1}{[k^i_{1}, k^i_{2}, \ldots, k^i_{n_i}]}<k,
$$
where

\[
[k_{1}^i,k_{2}^i,...,k_{n_i}^i]=k_{1}^i-\frac{1}{k_{2}^i-\frac{1}{\ddots}}.
\]

\end{enumerate}
Conversely,  $\it 1,2$ and $\it 3$ imply the existence
of a pair $(V,\vr)$.
\end{Theorem}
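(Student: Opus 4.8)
The plan is to extract the data $(1)$--$(3)$ from the minimal good $\bc^*$-equivariant resolution of the dicritical point, and conversely to reconstruct $(V,\vr)$ from such data by plumbing a star-shaped configuration of curves and then performing a Grauert contraction. Throughout one uses that, by the remark following Theorem \ref{Theorem:main}, the dicritical point $p$ is the only singularity of $V$ and the only fixed point of $\vr$, together with the structure theory of $\bc^*$-actions on surfaces (as in \cite{Orlik}) and the analysis already carried out in the proof of Theorem \ref{Theorem:main}.

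\emph{From a pair to the data.} Lift $\vr$ to the minimal good resolution $\pi\colon\widetilde V\to V$, chosen $\bc^*$-equivariant, and set $E=\pi^{-1}(p)$. Dicriticalness forces exactly one irreducible component $E_0$ of $E$ to be transverse to the lifted foliation $\widetilde{\fa}_{\vr}$ --- the dicritical component --- and the dual graph of $E$ is star-shaped with centre $E_0$; put $\sigma_0:=E_0$ and let $g$ be its genus. Near $E_0$ the lifted foliation is a fibration by discs meeting $E_0$ transversely once, so a tubular neighbourhood of $E_0$ in $\widetilde V$ is the disc bundle of its normal bundle $L$; set $k:=-c(L)=-(E_0\cdot E_0)$, which is $\geq 1$ because $E$ contracts to a point. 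Each of the $s$ arms of the star is a Hirzebruch--Jung string of smooth rational curves with self-intersections $-k^i_1,\dots,-k^i_{n_i}$, all $k^i_j\geq 2$, attached to $E_0$ at distinct points $r_1,\dots,r_s$, and these arms are precisely the resolutions of the finite-isotropy orbits of $\vr$. Everything is canonical once the resolution is fixed, so the tuple $\bigl(\sigma_0,\{r_i\},L,\{k^i_j\}\bigr)$ is well defined up to an isomorphism of the marked surface $(\sigma_0,\{r_i\})$, in particular up to $\Aut(\sigma_0)$. Finally, since $E$ contracts to a normal point its intersection form is negative definite, and the determinant formula for the intersection matrix of a star-shaped graph shows this to be equivalent to $-k+\sum_{i=1}^{s}1/[k^i_1,\dots,k^i_{n_i}]<0$, i.e. to condition $(3)$.

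\emph{From the data to a pair.} Conversely, given $(1)$--$(3)$, let $N$ be the total space of $L\to\sigma_0$ with its fibrewise $\bc^*$-action and $E_0\subset N$ the zero section, so $E_0\cdot E_0=-k$. Over each $r_i$ perform a chain of equivariant blow-ups that attaches to $E_0$ at $r_i$ a string of smooth rational curves of self-intersections $-k^i_1,\dots,-k^i_{n_i}$; the $\bc^*$-action extends at every step and each arm becomes the resolution of a cyclic quotient singularity carrying one exceptional orbit. Call the result $\widetilde V$, with exceptional configuration $E$ (the centre $E_0$ and the $s$ arms) and action $\widetilde\vr$. By $(3)$ and the same determinant formula the intersection matrix of $E$ is negative definite, so by Grauert's criterion there is a normal analytic surface $V$ and a proper modification $\pi\colon\widetilde V\to V$ contracting $E$ to a point $p$, and $\widetilde\vr$ descends to a $\bc^*$-action $\vr$ on $V$ with fixed point $p$. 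The images of the fibres of $N\to\sigma_0$ are infinitely many one-dimensional orbits accumulating only at $p$, so $p$ is dicritical; and $V$ is Stein --- in fact affine algebraic, being an affine cone over the orbifold curve $(\sigma_0,r_1,\dots,r_s)$ associated with the ample bundle $L^{-1}$, whose coordinate ring is finitely generated precisely because of $(3)$, in accordance with Theorem \ref{Theorem:main}. Checking that these two constructions are mutually inverse up to the stated equivalences then yields the asserted description of the moduli space.

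\emph{Main obstacle.} The hard part is twofold: proving that the $\bc^*$-equivariant resolution of a dicritical singularity has a star-shaped dual graph with the dicritical component at its centre --- so that $\sigma_0$, the line bundle $L$ and the arms are intrinsically attached to $(V,\vr)$ --- and the combinatorial computation turning negative definiteness of a star-shaped intersection matrix into the continued-fraction inequality $\sum_{i=1}^{s}1/[k^i_1,\dots,k^i_{n_i}]<k$; one must also verify carefully that plumbing-then-contracting is a two-sided inverse to resolving-then-reading-off.
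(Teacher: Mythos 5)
Your proposal is correct in its overall architecture and matches the paper's strategy at the top level: both directions of the bijection are obtained by (a) reading the data off an equivariant resolution whose exceptional set is a star centred at the unique dicritical component, and (b) realizing any admissible data by an explicit model and contracting. Where you genuinely diverge is in the construction of the model and the source of Steinness. The paper does not plumb and contract directly over the total space of $L$: it first compactifies $L$ to $\bar L$ by gluing in $L^{-1}$, performs all blow-ups at the \emph{infinity} end of each fibre $\bar L_{r_i}$ (this is how one attaches to $\sigma_0$ an arm whose first curve has self-intersection $-k^i_1$ without touching $\sigma_0\cdot\sigma_0=-k$ --- note that in your version, blowing up at $r_i\in E_0$ would change $E_0\cdot E_0$, and the fibre of $L$ is non-compact, so ``attaching a string of prescribed self-intersections'' needs exactly this compactification or an equivariant plumbing done with some care), and then defines $\tilde{\mathcal V}=X-D_\infty$. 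The key technical input is Lemma~\ref{23.4.07}: condition $\it 3$ is equivalent both to negative definiteness of $D_0$ (so $D_0$ contracts) \emph{and} to $D_\infty$ supporting a divisor with positive normal bundle, and the latter, via Hartshorne's contractibility/ampleness theorem, makes $\mathcal V=X-D_0$ affine algebraic with a good action (Proposition~\ref{affine}). Your route instead invokes Grauert's criterion for the contraction and asserts Steinness via an affine-cone/Remmert-reduction argument; that can be made to work for the Stein statement (the total space of a negative line bundle is $1$-convex and blow-ups preserve this), but as written the finite generation of the cone's coordinate ring is asserted rather than proved --- this is precisely the content the paper extracts from the positivity of $D_\infty$. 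Your approach buys a more intrinsic description (minimal good resolution, orbifold cone); the paper's buys the affine algebraicity and goodness of the action needed for Theorem~\ref{Theorem:main} at the same time.

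Two smaller points to watch. First, reading the data off the \emph{minimal good resolution of the surface singularity} is slightly risky in edge cases ($g=0$, $s\le 2$, $\sigma_0\cdot\sigma_0=-1$), where the dicritical component could be blown down; the paper instead uses the resolution of the \emph{foliation} (Theorem~\ref{Theorem:desingularization}), which always exhibits $\sigma_0$, and the normalization $k^i_j\ge 2$ is built into the linear model rather than into minimality of the resolution. Second, both you and the paper leave the injectivity statement (distinct data give non-conjugate pairs) at the level of a sketch; the paper reduces it to the rigidity argument of Theorem~\ref{Theorem:conjugacybasins}, which is the cleanest way to close that loop.
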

The above data can be read from the minimal resolution of the
desingularization at $p\in V$ of the foliation induced by $\vr$.

The proof of Theorem \ref{Theorem:main} consists of the following
 steps. We first analize in \S \ref{Section:resolution} the
resolution of the singularity $p\in V$ and obtain Theorem
\ref{Theorem:desingularization} which is an analytic version of
a theorem proved in  \cite{Orlik}. It turns out that there is
only one element $\sigma_0$ of arbitrary genus in the divisor of the
resolution of $p\in V$ on which $\mathbb{C}^*$ acts
transversely. All other divisors are Riemann spheres and are
invariant under the action of $\mathbb{C}^*$.
In \S \ref{Section:linearmodel} we linearize the
$\mathbb{C}^*$-action in a neighborhood of $\sigma_0$. The main
theorem of this section, Theorem \ref{mai}, does not require any
hypothesis on the self intersection number of $\sigma_0$.
In  \S \ref{Section:basin} we first introduce the linear model for
the resolution of $p\in V$ and then extend the linearization
obtained in the previous section to the basin of attraction of
$p\in V$.
In \S \ref{Section:basins} we prove that the basin of attraction
of $p\in V$ is the whole space $V$ and so the constructed
linearization provides the conjugacy claimed in Theorem
\ref{Theorem:main}.


\section{Resolution of singularities}
\label{Section:resolution}
In order to prove Theorem \ref{Theorem:main} we first describe the
resolution of the action $\vr$ and then compare it with the
resolution of a model good action.
\subsection{Holomorphic foliations}
We start with the resolution theorem for normal two dimensional
singularities (see \cite{Laufer}) and the resolution theorem for
holomorphic foliations (see \cite{Seidenberg}, \cite{Camacho-Sad})
that combined together assert, first, that there exists a proper
holomorphic map $\rho:\tilde V\to V$ such that $D$:=$\rho^{-1}(p)
=\bigcup_{i=0}^{r}\sigma_{i}$, is a finite union of compact Riemann
surfaces $\sigma_{i}$ intersecting at most pairwise at normal
crossing points, and then that $\tilde V$ is an analytic space of
dimension two with no singularities near $D$. More precisely, the
$\sigma_{i}$'s are compact Riemann surfaces without singularities
such that if $\sigma_{i}\cap \sigma_{j}\ne\emptyset$ then
$\sigma_{i}$ and $\sigma_{j}$ have normal crossing and $\sigma_{i}
\cap\sigma_{j}\cap\sigma_{k}=\emptyset$ if $i\neq j\neq k \neq i$.
Moreover, the {\it intersection matrix} $(\sigma_i\cdot \sigma_j)$
is negative definite (\cite{Laufer})and the restriction of $\rho$ to
$\tilde V\backslash D$ is a biholomorphism onto $V\backslash \{p\}$.
By means of this restriction $\mathcal{F}_\vr$ induces a foliation
$\mathcal{\tilde F}_\vr$ on $\tilde V\backslash D$
that can be extended to $\tilde V$ as a foliation with isolated
singularities. Each one of these
singularities can be written in local coordinates $(x,y)$ around
$0\in{\mathbb{C}^2}$ in one of the following forms : $(i)\,\emph
{simple singularities}$: $ xdy-y (\mu+\cdots)dx=0$ , $ \mu\notin \mathbb
{Q}_{+}$,  where the points denote higher order terms; $(ii)\emph
{ saddle-node singularities}$: $x^{m+1}dy- (y+a x^{m}y+\cdots)dx=0$,
$a\in\mathbb C$, $m\in\mathbb N$. A simple singularity has two
invariant manifolds crossing normally, they correspond to the $x$
and $y$-axes. The saddle-node has an invariant manifold corresponding
to the $y$-axis and, depending on the higher order terms, it may
not have another invariant curve (see \cite{Mr-Rm2}). The resolution
of $\mathcal{F}_\vr$ can be obtained in such a way that the
elements $\sigma_{i}$ fall in two categories. Either $\sigma_{i}$
is a \emph{dicritical component}, when $\mathcal {\tilde F}_\vr$
is everywhere transverse to $\sigma_{i}$, or a
\emph{nondicritical} component when $\sigma_{i}$ is tangent to
$\mathcal{\tilde F}_\vr$. In a similar way, by means of the
restriction $\rho$ to $\tilde V\backslash D$ the $\mathbb{C^*}$-
action $\vr$ on $V\backslash \{p\}$ induces a $\mathbb{C^*}$-
action $\tilde \vr$ on $\tilde V\backslash D$ that can be extended
to $D$ as a $\mathbb{C^*}$- action (see \cite{Orlik2}). For this it
is enough to observe that $D\subset \tilde V$ is analytic of
codimension one, $\tilde V$ is a normal analytic space and
$\tilde\vr$ is bounded in a neighborhood of $D$. We have therefore
that the orbits of $\tilde \vr$ are contained in the leaves of the
foliation $\tilde \fa_\vr$.

The divisor $D$ forms a graph with vertices $\sigma_{i}$ and sides
the nonempty intersections $\sigma_{i}\cap \sigma_{j}$.
A \emph{star} is a contractible connected graph where at most one
vertex, called its \emph{center}, is connected with more than two
other vertices. A \emph{weighted graph} is a graph where at each
vertex is associated its genus and its self-intersection number.

\subsection{On a theorem of Orlik and Wagreich }
In this section we describe the resolution of $p$ as a singular point
of $V$ and as a singularity of $\fa_\vr$.

\begin{Theorem}
\label{Theorem:desingularization} Let $V$ be a normal Stein
analytic space of dimension two and $\vr$ a $\bc^*$-action on $V$
with a dicritical singularity at $p\in V$. Then there is a
resolution $\rho: \tilde V\to V$   of $\fa_\vr$ at the point $p\in V$
such that
\begin{enumerate}
 \item
$\rho^{-1}(p)=\bigcup_{i=0}^{r}\sigma_{i}$ is a
weighted star graph centered at the Riemann surface $\sigma_0$ of
genus $g$, and consisting of Riemann spheres $\sigma_{i}$, $i>0$;
\item
$\sigma_{0}$ is the unique dicritical component of
$\mathcal{\tilde F}_\vr =\rho^*\mathcal{F}_\vr$;
\item
the pull-back action $\tilde \vr$ on $\tilde V$ is trivial on
$\sigma_{0}$ and nontrivial on each $\sigma_{i}$, $i>0$;
\item
The singular points of
$\mathcal{\tilde F}_\vr$ are $\sigma_i\cap \sigma_j\ne\emptyset,\ i,j\not
= 0$ and all of them are simple.
\end{enumerate}
\end{Theorem}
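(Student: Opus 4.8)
The starting point is the combined resolution $\rho:\tilde V\to V$ of the normal singularity $p$ and of the foliation $\fa_\vr$ described in \S\ref{Section:resolution}, so that $D=\rho^{-1}(p)=\bigcup_{i=0}^r\sigma_i$ is a normal-crossings divisor of compact Riemann surfaces with negative-definite intersection matrix, carrying the lifted foliation $\tilde\fa_\vr$ with simple or saddle-node singularities and the lifted action $\tilde\vr$. The first thing I would establish is the \emph{dichotomy} between $D$ and the action: since $\tilde\vr$ preserves $D$ and $D$ has only finitely many components, each $\sigma_i$ is either invariant under $\tilde\vr$ or swept out by it; a component swept out by the $\bc^*$-orbits would have to be a one-dimensional orbit closure, hence rational, and in fact there can be at most one component on which the action restricts nontrivially by a transitive (i.e. $\bc^*$-translation) action while all neighbouring data is invariant — one identifies this with the dicritical component. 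Concretely: a component $\sigma_i$ is dicritical for $\tilde\fa_\vr$ iff the generic orbit of $\tilde\vr$ meets it transversely; I would show dicriticalness of $\sigma_i$ forces $\tilde\vr$ to act trivially on $\sigma_i$ (the orbits are the local leaves, transverse to $\sigma_i$, so $\sigma_i$ itself is pointwise fixed up to the $\bc^*$-action being trivial on it), giving one direction of (3), while a nondicritical component is tangent to $\tilde\fa_\vr$, hence is a union of orbit closures, hence $\bc^*$ acts nontrivially on it and it is a Riemann sphere — this yields (2), the sphere claim in (1), and the nontriviality half of (3). The genus-$g$ curve $\sigma_0$ is then precisely the unique dicritical (equivalently, transversely-acted) component, and I must argue it is unique: two dicritical components meeting at a normal-crossing point would give a non-simple, non-saddle-node tangency configuration forbidden by the resolution form, and the connectedness of $D$ together with the star structure (proved next) forces a single center.

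The second block of work is the \emph{graph-theoretic} statement in (1) and the \emph{simplicity} statement in (4). For (4): the singular points of $\tilde\fa_\vr$ lie on $D$, and near a nondicritical component the action is nontrivial with the orbit through a generic point being the component itself; a saddle-node at a normal crossing $\sigma_i\cap\sigma_j$ would have a privileged invariant axis and a possibly nonexistent second separatrix, incompatible with the symmetry imposed by the $\bc^*$-action which fixes the crossing point and acts on both transverse directions with a well-defined ratio of eigenvalues — I would show the linear part of $\tilde\fa_\vr$ at such a point is semisimple with ratio in $\Q_-$... wait, rather the ratio lands outside $\Q_+$ by the simple-singularity normal form, so the singularity is simple, not a saddle-node. (A singularity on $\sigma_0\cap\sigma_i$, if such crossings occurred, would be dicritical-type; but the star will be arranged so that $\sigma_0$ is the center and its neighbours produce simple singularities as well.) For the star shape: the dual graph of $D$ is connected (as $\rho^{-1}(p)$ is connected) and a cycle in the graph would contradict negative-definiteness of the intersection matrix combined with the genus/adjunction constraints forced by the action on the spheres (each $\sigma_i$, $i>0$, a rational curve meeting the rest of $D$ in at most two points because $\bc^*$ acts on it with exactly two fixed points, which are the only candidates for intersection points with other components). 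That last observation — each sphere meets $D\setminus\sigma_i$ in at most the two fixed points of the $\bc^*$-action on it — is exactly what makes the graph a \emph{star} with center $\sigma_0$: only $\sigma_0$, on which the action is trivial, is free to meet arbitrarily many other components, while every other vertex has degree at most two.

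The main obstacle, I expect, is the rigorous extension and use of the action \emph{on} the divisor $D$ and the local analysis at the crossing points: one must be careful that the lifted action $\tilde\vr$, a priori only defined and bounded near $D$, genuinely extends holomorphically over $D$ (this is quoted from \cite{Orlik2} and uses normality of $\tilde V$), and then that its local linearization at each fixed point on $D$ is compatible with — indeed forces — the simple-singularity normal form of $\tilde\fa_\vr$ there. The delicate point is ruling out saddle-nodes: one has to use that a saddle-node singularity is not linearizable and has a formal/analytic invariant (the Martinet–Ramis modulus), whereas the presence of a $\bc^*$-action through the point trivializes such moduli, forcing semisimplicity and hence placing us in case $(i)$ with $\mu\notin\Q_+$. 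I would handle this by the standard argument that a nontrivial holomorphic $\bc^*$-action fixing a point acts on the tangent space with integer weights, its orbits are contained in the leaves, and the two coordinate axes (the weight eigenspaces) must be the two separatrices — so both separatrices exist and the singularity is simple. The rest of the theorem is then bookkeeping of the resulting weighted dual graph.
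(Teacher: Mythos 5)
Your overall architecture matches the paper's: extend the action over $D$, classify components as dicritical (action trivial, foliation transverse) versus nondicritical (action nontrivial, hence rational with exactly two fixed points, hence of valence at most two in the dual graph), and rule out saddle-nodes at the corners. Your route to item (4) --- linearize the $\bc^*$-action at a corner fixed point, read off nonzero integer weights on both axes, and conclude the singularity cannot be a saddle-node --- is a genuinely different and viable argument; the paper instead applies the Camacho--Sad index theorem along each chain (the index at the unique singularity of $\sigma_1$ equals $\sigma_1\cdot\sigma_1=-k_1\ne 0$, whereas a saddle-node has index zero relative to its strong manifold), which has the side benefit of producing the continued-fraction quantities $-[k_j,\dots,k_1]\ne 0$ that are reused later in the moduli description. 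Note also that the paper has to \emph{prove} that the action is nontrivial on each $\sigma_i$, $i>0$ (a winding-number argument for $\sigma_1$ adjacent to $\sigma_0$, and an invariant-fibration argument for the subsequent $\sigma_j$); your ``tangent, hence swept out by orbits, hence nontrivial'' is too quick as stated, since a pointwise-fixed curve is also invariant, but your own linearization technique closes this (trivial action on $\sigma_i$ gives weights $(0,w)$ at each of its points; $w\ne 0$ forces the orbits to be transverse, i.e.\ $\sigma_i$ dicritical).

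The genuine gap is in items (1)--(2), exactly where the paper uses the hypothesis that $V$ is \emph{Stein}, which your proposal never invokes. The paper excludes a second dicritical component, and excludes cycles in the dual graph, by observing that either configuration would produce a $\bc^*$-orbit whose closure is a compact analytic curve in $\tilde V$ not contained in $D$, hence a compact analytic curve in $V$ --- impossible in a Stein space. Your substitutes do not work. For uniqueness you only treat two dicritical components that \emph{meet}; two dicritical components joined by a linear chain of valence-two spheres is a tree satisfying all of your local and combinatorial constraints, and nothing in your argument excludes it. For acyclicity you appeal to ``negative-definiteness of the intersection matrix combined with genus/adjunction constraints,'' but cycles of rational curves with negative definite intersection matrix do occur as resolutions of normal surface singularities (cusp singularities), so negative definiteness cannot do this job; your valence bound only forces any cycle to pass through $\sigma_0$, and such a cycle still has to be excluded by the Stein/compact-curve argument. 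Without these two points the conclusion that $D$ is a star centered at a unique $\sigma_0$ is unproved.
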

In the algebraic context in which $V$ is affine and the
$\bc^*$-action is algebraic, the above theorem with items
$\it{1,2}$ and $\it{3}$
is a result of Orlik and Wagreich (see \cite{Orlik}). Our proof uses
the theory of holomorphic foliations on complex manifolds instead of
topological methods.
In order to prove Theorem \ref{Theorem:desingularization} we need the
following index theorem.
\subsection{The Index theorem}
Let $\sigma$ be a Riemann surface embedded in a two dimensional
manifold $S$ ; $\fa$ a foliation on $S$ which leaves $\sigma$
invariant and $q\in \sigma$. There is a neighborhood of $q$ where
$\sigma$ can be expressed by $(f=0)$ and $\fa$ is induced by the
holomorphic 1-form $\omega$ written as
$\omega=hdf+f\eta$.
Then we can associate the following index:
$$
i_{q}(\fa, \sigma):=-{\rm Residue}_q(\frac{\eta}{h})|_{\sigma}
$$
relative to the invariant submanifold $\sigma$.
In the case of a simple singularity as defined above if $\sigma$
is locally $(y=0)$ and $q=0$, this index is equal to $\mu$
(quotient of eigenvalues). In the case of a saddle-node, if
$ \sigma $ is equal to $(x=0)$ and $q=0$, this index is zero.
At a regular point $q$ of $\fa$ the index is zero.
The index theorem of \cite{Camacho-Sad} asserts that the sum of
all the indices at the points in $\sigma$ is equal to the
self-intersection number $\sigma\cdot\sigma$:
$$
\sum_{q\in{\sigma}} i_q(\fa,\sigma)=\sigma\cdot\sigma.
$$
\subsection{Proof of Theorem \ref{Theorem:desingularization}}
 By hypothesis, in the resolution of $p\in V$ there is at least
one dicritical component, say $\sigma_{0}$. Then the action $\tilde
\vr$ extends to $\sigma_0$ as a set of fixed points. We claim that
$\sigma_0$ is the unique dicritical component. Indeed, at each
dicritical component the $\mathbb{C^*}$- action $\tilde \varphi$
is trivial. Since $V$ is normal at $p\in V$, $\rho^{-1}(p)$ is
connected (\cite{Laufer}), thus if there is another dicritical
component, say $\sigma_{i}$, then there would exists
$\mathbb{C^*}$- orbits of $\tilde \varphi$, with compact analytic
closure  crossing $\sigma_{0}$ and $\sigma_{i}$ transversely
contradicting the fact that $V$ is Stein. Thus $\sigma_{0}$ is the
only dicritical component, and the action $\tilde \varphi$ is
trivial on $\sigma_{0}$. The same argument shows that there cannot
be cycles of components of $D$ starting and ending at $\sigma_0$.
Thus the graph associated to $\rho$ is contractible. \\

A
\emph{linear chain} at a point $q\in \sigma_{0}$ is a union of
compact Riemann surfaces, elements of the divisor $D$, say
$\sigma_{1},..., \sigma_{n}$ such that $\sigma_{1}\cap \sigma
_{0}=\{q\}$ and $\sigma_{i}\cap \sigma_{j}$ is nonempty if and
only if $i=j-1$ and in this case
it is a point, for $j= 2,...,n$. \\

\begin{Lemma}
\label{Lemma:desingularization} Suppose that ${r_{1}, r_{2},...,
r_{s}}$ are the crossing points at $\sigma_{0}$ of the divisor
$D$. Then the divisor $D$ consists of the union of $\sigma_{0}$
and linear chains of Riemann spheres at each of these crossing
points.
\end{Lemma}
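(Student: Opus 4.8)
The plan is to reduce the statement to two inputs: the fact, already shown above in the proof of Theorem~\ref{Theorem:desingularization}, that the dual graph of $D$ is a contractible tree with $\sigma_0$ its unique dicritical component; and the assertion that the induced action $\tilde\vr$ is \emph{nontrivial} on each component $\sigma_i$ with $i>0$. The second point is the heart of the matter, and I would prove it first; the remainder is a bookkeeping argument on trees combined with the classification of compact Riemann surfaces admitting a nontrivial $\bc^*$-action with a fixed point.

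\emph{Nontriviality off $\sigma_0$.} Since $\bc^*$ is connected and $D$ has finitely many irreducible components, $\tilde\vr$ leaves each $\sigma_i$ invariant. Suppose $\tilde\vr$ were trivial on some $\sigma_i$, $i>0$. As $\sigma_0$ is the only dicritical component, $\sigma_i$ is non-dicritical, hence invariant by $\tilde\fa_\vr$. Choose $q\in\sigma_i$ that is a regular point of $\tilde\fa_\vr$ and lies on no other component of $D$; this is possible because the singular set of $\tilde\fa_\vr$ and the intersections with other components are finite. Then $q$ is a fixed point of $\tilde\vr$ and $\sigma_i$ is, near $q$, a leaf of $\tilde\fa_\vr$. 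Linearizing the action at $q$ (see \cite{Orlik2}) and adjusting coordinates equivariantly so that $\sigma_i=\{y=0\}$, one gets $\tilde\vr(t,(x,y))=(t^{a}x,t^{b}y)$; fixing $\{y=0\}$ pointwise forces $a=0$. If $b=0$ as well, $\tilde\vr$ is trivial near $q$, hence on the connected space $\tilde V$, contradicting that $\vr$ is nontrivial (a trivial action has no dicritical singularity). If $b\neq 0$, the orbit through a point $(0,y_0)$ with $|y_0|$ small and nonzero contains $\{x=0, 0<|y|<\varepsilon\}$ and lies in a leaf of $\tilde\fa_\vr$; inside a flow-box neighbourhood of $q$ this connected arc lies in a single plaque, whose closure then contains $q$ and the whole segment $\{x=0,|y|<\varepsilon\}$, forcing the plaque through $q$ (which is $\sigma_i\cap U=\{y=0\}$) to contain $\{x=0\}$ — absurd. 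Hence $\tilde\vr$ is nontrivial on every $\sigma_i$ with $i>0$.

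\emph{Each $\sigma_i$, $i>0$, is a sphere meeting at most two other components, and assembly of the star.} If $r=0$ there is nothing to prove, so assume $D$ has at least two components; being a connected tree, each $\sigma_i$ with $i>0$ then meets some other component at a point, which, as the intersection of two $\tilde\vr$-invariant sets, is a fixed point of $\tilde\vr$. A compact Riemann surface carrying a nontrivial $\bc^*$-action with a fixed point must be $\mathbb{P}^1$: in genus $\geq 2$ the automorphism group is finite, and in genus $1$ every one-parameter subgroup of the automorphism group acts by fixed-point-free translations. So $\sigma_i\cong\mathbb{P}^1$, and a nontrivial $\bc^*$-action on $\mathbb{P}^1$ has exactly two fixed points; since each point $\sigma_i\cap\sigma_j$ ($j\ne i$) is one of them, $\sigma_i$ meets at most two other components. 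Now delete $\sigma_0$ from the tree: $D\setminus\sigma_0$ falls into connected subtrees, each meeting $\sigma_0$ in exactly one point (no cycle passes through $\sigma_0$), and that point is, by the normal-crossing condition, met only by $\sigma_0$ and a single component of the subtree. Thus there is one subtree per crossing point $r_1,\dots,r_s$, distinct crossing points carry distinct subtrees, and each subtree is a finite tree whose vertices have degree $\leq 2$ and whose vertex meeting $\sigma_0$ has degree $\leq 1$ within it — that is, a linear chain $\sigma_1,\dots,\sigma_n$ of Riemann spheres with $\sigma_1\cap\sigma_0=\{r_j\}$. This is precisely the asserted description of $D$.

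The main obstacle is the nontriviality step: a curve of fixed points of a $\bc^*$-action is perfectly possible in isolation, so the contradiction must be extracted from the interaction between the fixed-pointwise behaviour of $\tilde\vr$ on $\sigma_i$ and the constraint that every $\tilde\vr$-orbit lies in a leaf of $\tilde\fa_\vr$, which near a regular point $q$ is a genuine local constraint; linearizing the action at $q$ is what turns this into the concrete contradiction above. Everything else is combinatorics of trees and the elementary classification of curves with a $\bc^*$-action.
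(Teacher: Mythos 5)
Your proof is correct as a proof of the Lemma's literal statement, but it follows a genuinely different route from the paper's. The paper argues by induction along each chain emanating from $\sigma_0$: nontriviality of $\tilde\vr$ on the first component $\sigma_1$ is obtained from an $S^1$ winding-number argument at $r_j=\sigma_0\cap\sigma_1$ (the winding number of orbits in the transverse fibres $\{y=y_0\}$ is a nonzero constant as $y_0\to 0$), nontriviality on each subsequent component is obtained from the invariant-manifold fibration (a pointwise-fixed $\sigma_{i+1}$ would be dicritical), and the Camacho--Sad index theorem together with negative definiteness of the intersection matrix is invoked to show each corner singularity has nonzero index $-[k_j,\dots,k_1]$, hence is simple rather than a saddle-node. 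You instead give a single uniform nontriviality argument valid for every $\sigma_i$, $i>0$ (linearize the germ of the action at a generic fixed point of a putatively pointwise-fixed component and contradict the fact that $\sigma_i$ is locally a leaf via a flow box), then quote the classification of compact curves admitting a nontrivial $\bc^*$-action with a fixed point, and finish with tree combinatorics; this is more elementary in that it avoids the index theorem entirely, and it cleanly isolates the two inputs (unique dicritical component, no cycles through $\sigma_0$) established just before the Lemma. What you lose is exactly what the paper's longer proof buys: its computation shows en route that every singularity of $\tilde\fa_\vr$ is simple and records the indices $-[k_j,\dots,k_1]$, which is item 4 of Theorem~\ref{Theorem:desingularization} and is reused in \S\ref{Section:basin}; if your argument were substituted for the paper's, that item would still need a separate proof. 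One small point to keep explicit: the linearizability of the germ of a holomorphic $\bc^*$-action at a fixed point (your appeal to \cite{Orlik2}) is what makes the flow-box contradiction work, and the fixed point you linearize at must exist --- which your tree-connectedness remark does supply whenever $i>0$.
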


\begin{proof}
Consider the divisor $D$ at the point $r_{1}$ renamed as
$p_{0}$. Let $\sigma_{1}$ be such that $p_{0}=\sigma_{0}\cap
\sigma_{1}$. We claim that the $\mathbb{C^*}$-action $\tilde \varphi$
on $\sigma_{1}$ is nontrivial with a fixed point at $p_{0}$.
Indeed it can be represented in local coordinates $(x,y)$, where
$(x=0)=\sigma_0, \, (y=0)=\sigma_1$, by the vector field
$Y=(Y_1,0)$ with $Y_1(0,y)=0$. Consider the restriction of the
action $\tilde \vr$ to the subgroup $S^1\subset \bc^*$. Then in the
$\bc$-plane $(y=y_0)$ the $S^1$-orbit of a generic point $(x,y_0),
\, x\ne 0$, will turn $l$ times around $(0,y_0)$ and this number,
 which is different from zero, will be constant as $y_0\to 0$.
Therefore $\tilde \vr$ extends to the $x$-axis $\sigma_1$ as a
nontrivial $\bc^*$-action. Therefore $\sigma_{1}$ is a Riemann
sphere and there is another point $p_{1}\in \sigma_{1}$ which is
fixed by $\tilde \varphi$. Since $p_{1}$ is the unique singularity of
$\mathcal{\tilde F_{\vr}}$ in $\sigma_{1}$ we must have that the index of
$\tilde \fa_{\vr}$ with respect to the invariant manifold $\sigma_1$
at $p_1$ is given by (\cite{Camacho-Sad})
\[i_{p_{1}}(\mathcal{\tilde F_{\vr}}, \sigma_{1})=\sigma_{1}.\sigma_{1}=
-k_{1}, \, \,   k_{1}\in \mathbb{N}. \]

Therefore $p_{1}$ cannot be a saddle-node, as in this case this
index would be zero. This implies that $p_{1}$ is simple for
$\mathcal{\tilde F_{\vr}}$. Either the chain ends at
$\sigma_{1}$ or there is another component, say $\sigma_{2}$, such
that $\{p_{1}\}= \sigma_{1}\cap \sigma_{2}$. In this last case,
$p_{1}$ is simple. We claim that the action $\tilde \varphi$ on
$\sigma_{2}$ is nontrivial. Indeed, let $(x,y)$ be a system
of coordinates in a neighborhood $\mathcal N$ of $p_1=(0,0)$
such that $(x=0)=\sigma_{1}\cap\mathcal N$,
$(y=0)=\sigma_{2}\cap\mathcal N$. By derivation along the parameter
of the group, the action $\vr$ induces a vector field $Y$ on
$\mathcal N$. Assuming by contradiction that $\vr$ is trivial on
$\sigma_{2}$ we would have $ Y(x,0)=0$ and we can assume, changing
coordinates if necessary, that $ DY(x,0)= diag(0,\lambda_{x})$,
$\lambda_{0}\neq 0$. By continuity, $\lambda_{x}\neq 0$ for $x$
small enough. By the invariant manifold theorem for ordinary
differential equations, there is a fibration invariant by $Y$,
 transverse to $\sigma_{2}$, whose fibers are the subsets of $\mathcal N$
defined as
$\tau_{x}=\{(x,y); \lim_{t\to 0}\vr(t,(x,y))=(x,0)\}$, $\tau_0=\sigma_1$.
Thus $\sigma_{2}$ is a dicritical component of $\mathcal{\tilde F_{\vr}}$,
which is a contradiction.
Therefore $\sigma_{2}$ will be a {\sl Riemann sphere} with another fixed
point $p_{2}\in \sigma_{2}$ for the action $\tilde \varphi$. It is
clear that the corresponding index will be given by
\[
i_{p_{2}}(\mathcal{\tilde F_{\vr}},\sigma_{2})= -k_{2}+1/k_{1}\ne 0,  \, \,
 k_2=-\sigma_{2}.\sigma_{2}\in \mathbb N. \]

More generally, the linear chain will consist of a finite
sequence of elements of the divisor $\sigma_{0}, \sigma_{1},...,
\sigma_{n}$ such that $\sigma_{i}$, for $i\ne 0$, is a Riemann
sphere where the action $\tilde \varphi$ is nontrivial, and
$\sigma_{i}\cap\sigma_{i+1}= \{p_{i}\}$ is a simple singularity of
$\mathcal{\tilde F_{\vr}}$ for $i=1,..., n-1$. Denote by $-k_{i}=\sigma_{i}.
\sigma_{i},  k_{i}\in \mathbb N$. At each point $p_{i}$ the
index of this singularity relative to $\sigma_{n}$ is
\[
i_{p_j}(\tilde \fa_{\vr},\sigma_j)=-[k_{j},k_{j-1},...,k_{1}],\] where
we have a continued fraction
\[
[k_{j},k_{j-1},...,k_{1}]=k_{j}-\frac{1}{k_{j-1}-\frac{1}{\ddots}}.
\]
We claim that the numbers $[k_{j},k_{j-1},...,k_{1}]$, $j=1,...,n,$
are all well defined and different from zero. Indeed, this is a
consequence of the fact that the intersection matrix
$(\sigma_{i}\cdot\sigma_{i})$ is negative definite
(\cite{Laufer}). Let $M$ be a real symmetric $n\times n$ matrix and
Q a non-singular real $n\times n$ matrix. Then $M$ is negative
definite if and only if $Q^{t}MQ$ is negative definite. Given the
matrix $M=(\sigma_{i}\cdot\sigma_{j})$ we take $Q$ as the matrix
with one's in the diagonal, $a$ in the $(1,2)$ entry, and zeros
elsewhere. Then a convenient choice of $a$ will yield a matrix
$Q^{t}MQ$ with $-k_{1}$ in the $(1,1) $ entry and zeros in the
$(1,2)$ and $(2,1)$ entries. Repeating
this procedure we obtain that the following diagonal matrix
$$
{\rm diag}(-k_{1}, -[k_{2}, k_{1}],..., -[k_{n},k_{n-1},...,k_{1}])
$$
is negative definite, proving the claim and the lemma.

\end{proof}

Theorem~\ref{Theorem:desingularization} follows from the above
discussion and Lemma~\ref{Lemma:desingularization}.

\section{Linearization around the dicritical divisor
}
\label{Section:linearmodel}

Let ${\mathbb D}=\{z\in{\mathbb C} \mid |z|<1\}$ be the unit
disk. In the previous section we saw that the multiplicative
pseudo group $\mathcal G=({\mathbb C},{\mathbb D})-\{0\}$ acts
on $(\tilde V,\sigma_0)$ and the flow of the action $\vr$ is
transverse to $\sigma_0$. The purpose of this section is to
show that such an action is biholomorphically conjugated with
the canonical $\mathcal G$-action on the normal bundle to
$\sigma_0$ in $\tilde V$.
\subsection{$\mathcal G$-transverse actions to a Riemann surface}
 Let $\sigma$ be a Riemann surface embedded in a surface $S$.
We say that $\psi$ is a transverse  $\mathcal G$-action on $(S,\sigma)$
if
\begin{enumerate}
\item For all $a\in \sigma$ and $t\in \mathcal G$ we have $\psi(t,a)=a$.
\item There is a foliation $\F$ on $(S,\sigma)$, transverse to
$\sigma$ such that each leaf of $\F$ is the closure of
$\{\psi(t, a)\mid t\in \mathcal {G} \}$ for some $a\in (S,\sigma)-\sigma$.
\end{enumerate}
A typical example of a $\mathcal G$-action is the following:
We consider a line bundle $L$ on $\sigma$ and the
embedding $\sigma\hookrightarrow L$ given by the zero section. Now for
every $q\in \mathbb N$ we have a transverse  $\mathcal G$-action on $(L,\sigma)$
given by $(t,a)\mapsto t^qa$. It turns out that up to biholomorphy these
are the only transverse $\mathcal G$-actions.
\begin{Theorem}[Linearization theorem]
\label{mai} Let $\sigma$ be a Riemann surface embedded in a surface $S$
and $\psi$ a transverse $\mathcal G$-action on $(S,\sigma)$. Then $\psi$
is linearizable in the sense that there exist a biholomorphism
$h:(S,\sigma)\to (N,\sigma)$, where $N$ is the normal bundle to $\sigma$
in $S$, and a natural number $q$ such that
$h(\psi(t,a))=t^qh(a)$ for any $a\in (S,\sigma)$.
\end{Theorem}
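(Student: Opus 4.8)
The plan is to construct the linearizing biholomorphism by averaging coordinates along the $S^1\subset\mathcal G$ subgroup action, combined with a tubular neighborhood theorem for $\sigma$ in $S$. First I would cover $\sigma$ by finitely many charts $U_\alpha$ on which the foliation $\F$ is trivial, so that there are submersions $y_\alpha:U_\alpha\to\bc$ with $(y_\alpha=0)=\sigma\cap U_\alpha$ whose fibers are the leaves of $\F$; on overlaps the transition is $y_\beta=g_{\alpha\beta}(x)y_\alpha+O(y_\alpha^2)$ and the linear cocycle $(g_{\alpha\beta})$ defines the normal bundle $N$. The action $\psi$ restricted to such a chart, differentiated along the parameter, gives a holomorphic vector field vanishing on $\sigma$; on a leaf it is a one-dimensional $\mathcal G$-action, hence (since $\mathcal G$ is the multiplicative pseudogroup of the disk minus $0$) conjugate to $t\mapsto t^{q_\alpha}$ for an integer $q_\alpha\ge 1$, where the $S^1$-orbit winds $q_\alpha$ times; the argument used in the proof of Lemma~\ref{Lemma:desingularization} (the winding number of the $S^1$-orbit of a nearby point is locally constant) shows $q_\alpha$ is the same integer $q$ on every chart meeting $\sigma$ in a connected set, hence globally.

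Next, on each chart I would produce a \emph{preferred} fiber coordinate $z_\alpha$ adapted to the action: starting from $y_\alpha$, set
$$
z_\alpha(a)=\int_{S^1} \theta^{-q}\,(\text{pr}_2\circ\psi)(\theta,a)\,\frac{d\theta}{2\pi}
$$
(the $\mathcal G$-equivariant average), so that $z_\alpha(\psi(t,a))=t^q z_\alpha(a)$ for $t$ in a neighborhood of $S^1$, and by analytic continuation along the pseudogroup for all $t$ for which the flow is defined; since $z_\alpha=y_\alpha+O(y_\alpha^2)$, $z_\alpha$ is still a submersion cutting out $\sigma$ and a coordinate transverse to it. Because the flow lines of $\psi$ are exactly the leaves of $\F$, the level sets of $z_\alpha$ are unions of leaves, so on overlaps $z_\beta$ depends only on $z_\alpha$ and $x$, and equivariance forces $z_\beta=\hat g_{\alpha\beta}(x)\,z_\alpha$ to be \emph{linear} in the fiber — the higher-order terms are killed by the homogeneity $z\mapsto t^q z$. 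Thus $(\hat g_{\alpha\beta})$ is a holomorphic line-bundle cocycle on $\sigma$, and $a\mapsto(x(a),z_\alpha(a))$ glues to a biholomorphism $h:(S,\sigma)\to(\hat N,\sigma)$ onto the total space of the line bundle $\hat N$ defined by this cocycle, with $h(\psi(t,a))=t^q h(a)$.

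Finally I would identify $\hat N$ with the normal bundle $N$ of $\sigma$ in $S$: the linear part of $\hat g_{\alpha\beta}$ is, by construction, the linear part of the original transition $g_{\alpha\beta}$, which is precisely the cocycle of $N=T_\sigma S/T\sigma$; so $\hat N\cong N$ canonically and the map $h$ is the desired linearization. The main obstacle I expect is the gluing/globalization step: one must check that the locally defined equivariant averages $z_\alpha$ are genuinely defined on full neighborhoods of $\sigma$ (the $\mathcal G$-flow of nearby points stays in the chart for $t$ near $S^1$, which needs the compactness of $\sigma$ and a uniform size estimate on the tubular neighborhood), and that the winding integer $q$ is globally constant — both of which hinge on the compactness of $\sigma$ together with the transversality of $\F$; the subtlety that $\sigma$ may have positive genus does not interfere, exactly as the statement of Theorem~\ref{mai} advertises. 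Once these uniformities are in hand, the equivariance automatically promotes the $C^\infty$ tubular neighborhood to a holomorphic linear one, and the proof is complete.
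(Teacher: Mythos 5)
Your argument is essentially correct, but it follows a genuinely different route from the paper's. You linearize locally by a Bochner--type averaging over the circle subgroup (extracting the weight-$q$ Fourier component of the fiber coordinate along the orbit), and then globalize by observing that in the resulting equivariant coordinates $(x_\alpha,z_\alpha)$, with $x_\alpha$ a first integral of $\F$, the identity $F(x,t^qz)=t^qF(x,z)$ kills every term of fiber-degree $\ge 2$ in the transition functions, so the charts assemble directly into a line bundle whose cocycle has the same linear part as the original one, i.e.\ the normal bundle. The paper instead linearizes locally via the uniqueness of tangent-to-identity linearizations of germs (Lemma~\ref{Lemma:26mar07}), constructs a global equivariant meromorphic function $u$ with $u\circ\psi_t=t^qu$ by trivializing a cocycle whose restriction to $\sigma$ is the normal bundle twisted by the divisor of a meromorphic section of it (Lemma~\ref{Lemma:daava}), forms the $1$-form $\omega=u\,dv$, and obtains both the biholomorphism $h$ and the conjugacy of the actions from the rigidity statement that a biholomorphism fixing $\sigma$ and preserving $\omega$ is unique (Lemmas~\ref{28mar07} and~\ref{manooo}). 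Your route is more direct and avoids the detour through meromorphic sections and the triviality of the cocycle on $\sigma$; the paper's route builds the $1$-form machinery it reuses elsewhere. Two slips in your write-up should be repaired, though neither is fatal: the leaves of $\F$ are the level sets of the \emph{base} coordinate $x_\alpha$, not the fibers of $y_\alpha$, and the level sets of $z_\alpha$ are \emph{not} unions of leaves --- what your transition argument actually uses, and what is true, is that $x_\alpha$ is constant along the orbits of $\psi$, so that equivariance of $z_\alpha$ and $z_\beta$ forces $z_\beta=\hat g_{\alpha\beta}(x)\,z_\alpha$. Also, since $\mathcal G$ is only the punctured-disk pseudogroup, the average should be taken over a circle $|t|=r<1$, e.g.\ as the Cauchy integral $\frac{1}{2\pi i}\oint_{|t|=r}t^{-q-1}\,y_\alpha(\psi(t,a))\,dt$, using that $\psi(t,a)$ extends holomorphically to $t=0$; this changes nothing else in the argument.
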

Notice that the linearization of $\psi$ yields also the linearization of the
associated foliation. An immediate corollary of the above theorem is that
non-linearizable neighborhoods do not admit any transversal $\mathcal G$-action.
For instance, Arnold's example in which $\sigma$ is a torus of self-intersection
number zero in some complex manifold of dimension two is not linearizable and
so it does not admit any transversal $\mathcal G$-action (see \cite{arn}).

\subsection{Local linearization} Let $S=(\mathbb {C}^2,0)$ and
$0\in \sigma\subset S$ be a smooth curve in $S$. In a similar way as
before we define a $\mathcal G$-action on $(S,\sigma)$ transverse to
$\sigma$ and call it the local transverse $\mathcal G$-action.
\begin{Lemma}
\label{Lemma:26mar07} Any  local transverse $\mathcal G$-action  can
be written in a local system of coordinates in the form
$
\psi(t, (x,y))=(x,t^qy).
$
\end{Lemma}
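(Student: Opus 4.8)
The plan is to produce the desired normal form in two stages: first straighten out the curve $\sigma$ and the foliation $\F$ that comes with the transverse $\mathcal G$-action, and then use the $\mathcal G$-action itself to upgrade the straightening into the exact linear form $\psi(t,(x,y))=(x,t^qy)$. First I would choose coordinates $(x,y)$ on $(\C^2,0)$ so that $\sigma=(y=0)$; this is possible since $\sigma$ is a smooth curve through the origin. By definition a transverse $\mathcal G$-action carries a foliation $\F$ on $(S,\sigma)$ transverse to $\sigma$ whose leaves are the closures of the orbits $\{\psi(t,a)\mid t\in\mathcal G\}$. Because $\F$ is transverse to $\sigma=(y=0)$ at $0$, the Frobenius/flow-box theorem lets me further change coordinates (adjusting only the $x$-variable, so as to preserve $\sigma$) so that $\F$ becomes the horizontal foliation $dy=0$, i.e. the leaves are the lines $\{y=\mathrm{const}\}$. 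After this normalization each orbit of $\psi$ lies in a fixed horizontal line $\{y=y_0\}$, so we may write $\psi(t,(x,y))=(\phi(t,x,y),y)$ for some holomorphic map $\phi$ with $\phi(t,x,0)=x$ (fixed-point condition on $\sigma$) and satisfying the $\mathcal G$-cocycle identity $\phi(ts,x,y)=\phi(t,\phi(s,x,y),y)$.

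Next I would exploit the $\mathcal G$-action structure along a single leaf. Fix $y=y_0\ne 0$ small; then $t\mapsto \phi(t,\cdot,y_0)$ is an action of the multiplicative pseudo-group $\mathcal G=(\C,\bd)-\{0\}$ on a neighborhood of $0$ in the $x$-line, with $0$ not fixed (otherwise the leaf through nearby points would accumulate only on $\sigma$, contradicting transversality and the leaf description). Writing the infinitesimal generator $X(x,y)=\frac{d}{dt}\big|_{t=1}\phi(t,x,y)\,\partial_x$, the cocycle identity forces $X$ to be a linear vector field in the flow parameter: more precisely, differentiating $\phi(t\cdot e^s,x,y)=\phi(e^s,\phi(t,x,y),y)$ in $s$ at $s=0$ shows that the flow of $X$ at "time $\log t$" equals $\phi(t,\cdot,y)$, so $\phi(t,x,y)$ is the time-$\log t$ flow of a holomorphic vector field $X$ on $(\C^2,0)$ vanishing on $(y=0)$ and with $X=c(y)\,x\,\partial_x+O(x^2)$ near $\sigma$; the number $q$ will emerge as the appropriate normalization of $c(0)$, which is forced to be a nonzero integer because the $S^1\subset\mathcal G$ orbits must close up (the winding-number argument already used in the proof of Lemma \ref{Lemma:desingularization}). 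Then a Poincaré–Dulac type linearization of the one-variable family $\phi(t,\cdot,y)$, holomorphic in the parameter $y$, conjugates it to $x\mapsto t^q x$; since the conjugation can be taken tangent to the identity and fixing $y$, it is a change of coordinates $(x,y)\mapsto(g(x,y),y)$ preserving $\sigma$ and $\F$, after which $\psi(t,(x,y))=(t^q x,y)$. Relabeling the coordinates $(x,y)\leftrightarrow(y,x)$ gives the stated form $\psi(t,(x,y))=(x,t^q y)$.

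The main obstacle I expect is controlling the linearization uniformly in the transverse parameter $y$, in particular near $y=0$ where the leaf degenerates to the fixed-point set $\sigma$: one must check that the conjugating coordinate change extends holomorphically across $\sigma$ and does not blow up. This is where the $\mathcal G$-action hypothesis (as opposed to a mere $\mathbb Z$- or $\mathbb N$-action) is essential — having the full local multiplicative semigroup at our disposal produces a genuine holomorphic infinitesimal generator $X$ on all of $(\C^2,0)$, so the problem becomes the linearization of a single holomorphic vector field vanishing on a smooth curve with nonzero (indeed integer) normal eigenvalue, which is classical and causes no small-divisor difficulties. The integrality of $q$, needed for the statement, I would extract from the $S^1$-winding argument rather than attempt to read it off the vector field directly.
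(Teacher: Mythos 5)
Your overall strategy is the right one and is essentially the paper's: straighten $\sigma$ and the orbit foliation, expand the action along each leaf, use the group law to pin down the linear coefficient as $t^q$, and then linearize the resulting one-variable family holomorphically in the transverse parameter. However, the written setup is internally inconsistent, and the inconsistency propagates into false intermediate claims. With $\sigma=(y=0)$, a foliation \emph{transverse} to $\sigma$ is straightened to $dx=0$ (leaves $\{x=\mathrm{const}\}$, each meeting $\sigma$ exactly once), not to $dy=0$: the leaves $\{y=\mathrm{const}\}$ are tangent to (indeed parallel copies of) $\sigma$, so in your normal form $\psi(t,(x,y))=(\phi(t,x,y),y)$ no orbit would ever accumulate on $\sigma$, contradicting the definition of a transverse $\mathcal G$-action. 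The correct normal form is $\psi_t(x,y)=(x,p_{t,x}(y))$ with $p_{t,x}(0)=0$: the induced action on each leaf \emph{does} fix the point where the leaf meets $\sigma$ (and the orbits accumulate precisely there as $t\to 0$), contrary to your assertion that ``$0$ is not fixed''; likewise the generator has the form $c(x)\,y\,\partial_y+O(y^2)$, vanishing to first order on $(y=0)$, whereas your $X=c(y)x\,\partial_x+O(x^2)$ vanishes on the wrong curve. As written the geometric picture cannot be followed and must be redone with the roles of the two coordinates sorted out.

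Once that is corrected, your argument coincides with the paper's proof up to two cosmetic differences. The paper works directly with the series $p_{t,x}(y)=\sum_{i\ge 1}p_i(t,x)y^i$: the group law gives $p_1(t_1t_2,x)=p_1(t_1,x)p_1(t_2,x)$, and holomorphy of $p_1$ at $t=0$ --- which is exactly where the pseudo-group $\mathcal G=(\bc,\bd)-\{0\}$ and the accumulation of orbits on $\sigma$ enter --- forces $p_1(t,x)=t^q$ with $q\in\N$; one then applies Koenigs' linearization with holomorphic dependence on the parameter $x$. You pass instead through the infinitesimal generator and extract the integrality of $q$ from an $S^1$-winding argument; both are legitimate, but the multiplicative character relation already yields $q\in\N$ directly, so the winding argument is superfluous. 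Your closing remark that the presence of the full $\mathcal G$-action (as opposed to a single germ) eliminates small-divisor problems is correct and is implicitly the same observation the paper makes by exhibiting the multiplier as $t^q$ with $|t|<1$.
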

\begin{proof}
We take a coordinates system $(x,y)$ around $0\in\bc^2$ such that the the
foliation $\fa_\psi$ is given by $dx=0$ and $\sigma$ is given
by $y=0$. In these coordinates the flow $\psi_t$ of the $\bc^*$-action is
given by:
$$
\psi_t:(\bc^{2},0)\rightarrow (\bc^{2},0),\  \psi_t(x,y)= (x, p_{t,x}(y)).
$$
Since the orbits of $\psi$ tend to $\sigma$ when $t$ tends to zero, $p_{t,x}$
is a holomorphic function in $t\in (\bc,\mathbb{D})$. We have also $p_{t,x}(0)=0$
because $\sigma$ is the set of fixed points of $\psi$. We can write $p_{t,x}(y)$
as a series
$$
p_{t,x}(y)=\sum_{i=1}p_i(t,x)y^i.\ 
$$
Substituting the above term in $\psi(t_1t_2,a)=\psi(t_1,\psi(t_2,a))$
we obtain
$$
p_1(t_1t_2,x)=p_1(t_1,x)p_1(t_2,x), \ t_1,t_2\in \mathcal{G}, \ x\in(\bc,0 ).
$$
Since $p_1$ is holomorphic at $t=0$, the derivation of the above equality
in $t_1$ implies that $p_1(t,x)=t^q$ for some $q\in \N$. Now, by the Theorem
on the linearization of germs of holomorphic mapings, there is a unique
$f_{t,x}:(\bc,0)\mapsto(\bc,0)$
which is tangent to the identity, depends holomorphically on $t,x$ and
$$
f^{-1}_{t,x}\circ p_{t,x} \circ f_{t,x}(y)=t^qy.
$$
The $\bc^*$-action $\psi$ in the coordinates $(\tilde x,\tilde y)=(x,f_{t,x}(y))$
has the desired form.
\end{proof}

Now consider on $S$ a foliation $\F$ which is transverse to $\sigma$
(no $\mathcal G$-action is considered). Let $\omega$ be a $1$-form on
$S$ such that
$$
div(\omega)=\sigma+nL_{0},
$$
where $n\in \Z$ and $L_0$ is the leaf of $\F$ through $0\in S$.
\begin{Lemma}
\label{28mar07} Given a local system of coordinates $x$ in $\sigma$,
there is a unique system of coordinates $(\tilde x,\tilde y)$ in
$S$ such that
\begin{enumerate}
\item The restriction of $\tilde x$ to $\sigma$ is $x$; \item The
$1$-form $\omega $ in $(\tilde x,\tilde y)$ is of the form $\tilde
x^n\tilde yd\tilde x$.
\end{enumerate}

\end{Lemma}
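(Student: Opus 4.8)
The plan is to first normalize $\F$ and $\sigma$ and then impose the divisor condition to pin down the coordinates. First I would pick any local coordinates $(x_0, y_0)$ on $S$ with $\sigma = (y_0 = 0)$ and the restriction of $x_0$ to $\sigma$ equal to the given chart $x$. Since $\F$ is transverse to $\sigma$, after a further change of the $y$-coordinate alone (straightening the leaves of $\F$, which is possible because $\F$ is nonsingular and transverse to $\sigma$ — the same straightening used in Lemma \ref{Lemma:26mar07}) I may assume $\F$ is given by $dx = 0$, while $\sigma = (y = 0)$ is still preserved and the restriction of $x$ to $\sigma$ is unchanged. Now $L_0$, the leaf through $0$, is $(x = 0)$. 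In these coordinates a $1$-form defining $\F$ is $g(x,y)\,dx$ for some holomorphic $g$ that does not vanish identically on any leaf; the divisor condition $\mathrm{div}(\omega) = \sigma + nL_0$ forces $g(x,y) = u(x,y)\, x^n y$ with $u$ a nonvanishing holomorphic unit (here one uses that $\sigma$ and $L_0 = (x=0)$ are smooth, so the vanishing orders are exactly $1$ and $n$ respectively, and the unit absorbs everything else). Thus $\omega = u(x,y)\, x^n y\, dx$.

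The next step is to absorb the unit $u$ by a change of coordinates of the restricted form $\tilde x = x$, $\tilde y = \phi(x,y)$ with $\phi(x,0) = 0$, $\partial_y\phi(x,0) \ne 0$ — this keeps $\F = (d\tilde x = 0)$, keeps $\sigma = (\tilde y = 0)$, and keeps $\tilde x|_\sigma = x$. Under such a change $\omega$ becomes (up to a nonvanishing functional multiple, since $\omega$ is only defined up to such a multiple as a foliation-defining form, but here $\omega$ is a \emph{given} form, so I must track the actual transformation) $\omega = u(x, y)\, x^n y\, dx$ and I want it to equal $\tilde x^n \tilde y\, d\tilde x = x^n \phi(x,y)\, dx$. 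Comparing, I need $\phi(x,y) = u(x,y)\, y$, which indeed has $\phi(x,0)=0$ and $\partial_y\phi(x,0) = u(x,0) \ne 0$, so it is an admissible coordinate change. This gives existence.

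For uniqueness, suppose $(\tilde x, \tilde y)$ and $(\hat x, \hat y)$ both satisfy (1) and (2). Since $\omega = \tilde x^n \tilde y\, d\tilde x = \hat x^n \hat y\, d\hat x$, the foliation $d\tilde x = 0 = d\hat x$ is the same, so $\hat x = \hat x(\tilde x)$ depends only on $\tilde x$; condition (1) (restriction to $\sigma$ equals $x$ for both, hence they agree on $\sigma$, and $\sigma$ meets every leaf of $\F$) forces $\hat x = \tilde x$. Then $\tilde x^n \tilde y\, d\tilde x = \tilde x^n \hat y\, d\tilde x$ gives $\hat y = \tilde y$ wherever $\tilde x \ne 0$, hence everywhere by continuity. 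So the coordinate system is unique.

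\textbf{Main obstacle.} The delicate point is the bookkeeping in the second paragraph: $\omega$ is a genuinely given $1$-form (not just a foliation up to units), so when I change coordinates I cannot freely rescale, and I must verify that the single admissible change $\phi(x,y) = u(x,y)y$ simultaneously (i) lands $\omega$ in the exact normal form $\tilde x^n\tilde y\,d\tilde x$ with no leftover unit, and (ii) is a legitimate biholomorphism fixing $\sigma$ and the chart on $\sigma$. One should also double-check the reading of the hypothesis $\mathrm{div}(\omega) = \sigma + nL_0$ when $n < 0$ (then $\omega$ has a pole along $L_0$), which is harmless but requires writing $g = u x^n y$ with $x^n$ interpreted as a meromorphic unit along $(x=0)$; the argument is otherwise unchanged.
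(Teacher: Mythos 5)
Your proof is correct and follows essentially the same route as the paper's: choose coordinates adapted to $\sigma$ and $\F$ with $\tilde x\mid_\sigma=x$, use the divisor hypothesis to write $\omega=p\,\tilde x^n\tilde y\,d\tilde x$ with $p$ a unit, and absorb $p$ into the $\tilde y$-coordinate, your uniqueness argument being a spelled-out version of the paper's one-line assertion that a coordinate change fixing $\sigma$ and preserving the normal form is the identity. (One cosmetic slip: straightening $\F$ to $dx=0$ while keeping $\sigma=(y=0)$ and the chart on $\sigma$ is achieved by modifying the $x$-coordinate along the leaves, not the $y$-coordinate alone; this does not affect the argument.)
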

\begin{proof}
For the proof of the existence we take a coordinates system $(\tilde
x,\tilde y)$ in a neighborhood of $0$ in $S$ such that $\sigma$ and
$\F$ in this coordinate system are given respectively by $\tilde
y=0$ and $d\tilde x=0$ and $\tilde x \mid_\sigma=x$. We write $\omega
=p\tilde x^n\tilde y d\tilde x$, where $p\in\O_{S},\ p(0)\not =0$.
By changing the coordinates $(\tilde x,\tilde y)\rightarrow
(\tilde x,p\tilde y)$ we obtain the desired coordinate system.
The uniqueness follows from the fact that  any local biholomorphism
$f:({\mathbb C}^2,0)\to ({\mathbb C}^2,0)$ which is the identity in
$\tilde y=0$ and $f^{*}\tilde x^n\tilde yd\tilde x=\tilde x^n\tilde
yd\tilde x$  is the identity map.
\end{proof}
\subsection{Construction of differential forms} Consider a Riemann
surface $\sigma$ embedded in a two dimensional manifold $S$. We take a
meromorphic section $s$ of the normal bundle $N$ of $\sigma$ in $S$ and
set
$$
div(s)=\sum n_ip_i,\ n_i\in\Z,\ p_i\in \sigma.
$$
\begin{Lemma}
\label{Lemma:daava} For a transverse  $\mathcal G$-action $\psi$ on
$(S,\sigma)$, there is a meromorphic function $u$ on $(S,\sigma)$ such
that
\begin{enumerate}
\item
$$
div(u)=\sigma-\sum n_ip_i,\ n_i\in\Z,\ p_i\in \sigma,
$$
\item
$$
u(\psi(t, a))=t^qu(a),\ a\in (S,\sigma),\ t\in \mathcal G.
$$
\end{enumerate}
Let $\tilde{v}$ be an arbitrary meromorphic function on $\sigma$ and
$v$ its extension to S along the foliation $\F$. The 1-form
\[
\omega=udv
\]
has the properties:
\begin{enumerate}
\item $\omega$ induces the foliation $\F$; \item The divisor of
$\omega$ is $\sigma+K$, where $K$ is $\F$-invariant.

\item
$
\psi_{t}^{*} \omega =t^q\omega,\ t\in \mathcal G,
$
where $\psi_{t}(x)=\psi(t,x)$.
\end{enumerate}

\end{Lemma}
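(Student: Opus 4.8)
The plan is to construct the $\mathcal G$-equivariant meromorphic function $u$ directly from the linearizing coordinates provided by Lemma~\ref{Lemma:26mar07}, then patch these local constructions together using the fact that $\sigma$ is a Riemann surface and the discrepancies are holomorphic units. First I would cover $\sigma$ by finitely many charts $(U_\alpha,(x_\alpha,y_\alpha))$ in which, by Lemma~\ref{Lemma:26mar07}, the action reads $\psi(t,(x_\alpha,y_\alpha))=(x_\alpha,t^q y_\alpha)$ and $\sigma=\{y_\alpha=0\}$; note $q$ is a well-defined global integer since it is the weight of the action on the normal bundle, hence locally constant on the connected base $\sigma$. On each chart the function $y_\alpha$ satisfies $y_\alpha(\psi(t,a))=t^q y_\alpha(a)$ and has divisor exactly $\sigma\cap U_\alpha$. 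On overlaps $U_\alpha\cap U_\beta$ the ratio $g_{\alpha\beta}=y_\alpha/y_\beta$ is a nonvanishing holomorphic function; because both $y_\alpha$ and $y_\beta$ are multiplied by the same factor $t^q$ under $\psi$, the cocycle $g_{\alpha\beta}$ is $\psi$-invariant, i.e. constant along the foliation $\F$, hence it is (the pullback along $\F$ of) a transition cocycle for a line bundle on $\sigma$ — indeed precisely the dual normal bundle $N^*$.

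Next I would introduce the prescribed divisor. Fix the meromorphic section $s$ of $N$ with $\operatorname{div}(s)=\sum n_i p_i$; dually this is a meromorphic function on the total space of $N$, or equivalently a collection of local meromorphic functions $\varphi_\alpha$ on $\sigma\cap U_\alpha$ with $\varphi_\alpha/\varphi_\beta = g_{\alpha\beta}$ on overlaps and with total divisor $\sum n_i p_i$. Extend each $\varphi_\alpha$ to $U_\alpha$ along the leaves of $\F$ (using transversality of $\F$ to $\sigma$), calling the extension $\hat\varphi_\alpha$; since $\hat\varphi_\alpha$ is constant on leaves it is $\psi$-invariant. Then set $u_\alpha := y_\alpha/\hat\varphi_\alpha$ on $U_\alpha$. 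On overlaps $u_\alpha/u_\beta = (y_\alpha/y_\beta)(\hat\varphi_\beta/\hat\varphi_\alpha) = g_{\alpha\beta}\cdot g_{\alpha\beta}^{-1}=1$, so the $u_\alpha$ glue to a single meromorphic function $u$ on a neighborhood of $\sigma$ in $S$. Its divisor is $\sigma - \sum n_i p_i$ (the zero of $y_\alpha$ along $\sigma$, minus the $\F$-extended divisor of $\hat\varphi_\alpha$, which is $\sum n_i p_i$ pulled back along $\F$), and since $y_\alpha(\psi(t,a))=t^q y_\alpha(a)$ while $\hat\varphi_\alpha$ is $\psi$-invariant, $u(\psi(t,a))=t^q u(a)$. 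This gives the two properties of $u$ claimed in the statement.

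For the 1-form $\omega = u\,dv$: property~(1), that $\omega$ induces $\F$, is immediate because $v$ is by construction constant on the leaves of $\F$ (it is the $\F$-extension of $\tilde v$), so $dv$ vanishes on $\F$ and $u\,dv$ has $\F$ as its foliation wherever $u$ is finite and nonzero; the meromorphic coefficients do not change the induced foliation. Property~(3) follows by pulling back: $\psi_t^*\omega = \psi_t^*(u)\,d(\psi_t^*v) = (t^q u)\,dv = t^q\omega$, using the equivariance of $u$ and the $\psi$-invariance of $v$ (so $\psi_t^*v=v$). For property~(2), writing $\omega$ in a chart of the type given by Lemma~\ref{28mar07} — or directly in the $(x_\alpha,y_\alpha)$ coordinates — one computes that the coefficient of $\omega$ is $y_\alpha$ times a function whose divisor lies entirely in leaves of $\F$ (coming from $\hat\varphi_\alpha$ and from $dv$), so $\operatorname{div}(\omega)=\sigma+K$ with $K$ a sum of leaves, hence $\F$-invariant.

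The main obstacle is the gluing step: one must be sure that the locally defined equivariant functions $y_\alpha/\hat\varphi_\alpha$ have \emph{exactly} matching transition behavior, which forces checking that (a) the weight $q$ is genuinely global, (b) the extension-along-$\F$ operation commutes with taking ratios on overlaps, and (c) the chosen local representatives $\varphi_\alpha$ of the section $s$ can be normalized to carry precisely the cocycle $g_{\alpha\beta}$ rather than a cohomologous one — this is exactly the statement that $s$ is a section of $N$, whose transition functions are $g_{\alpha\beta}^{-1}$ (or $g_{\alpha\beta}$, up to the convention relating $N$ and $N^*$). Once these compatibilities are in place the construction of $u$, and then of $\omega=u\,dv$, is forced, and the three listed properties of $\omega$ reduce to the coordinate computation in Lemma~\ref{Lemma:26mar07} together with the equivariance/invariance already established.
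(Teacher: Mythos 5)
Your construction is correct and follows essentially the same route as the paper: both build $u$ locally from the linearizing coordinates $y_\alpha$ of Lemma~\ref{Lemma:26mar07} and use the meromorphic section $s$ of $N$ to cancel the normal-bundle cocycle $y_\alpha/y_\beta$. The only cosmetic difference is that you divide directly by the $\F$-extended local components of $s$, whereas the paper first forms the cocycle $u_{\alpha\beta}$, observes that its class in $H^1(\sigma,\O_\sigma^*)$ is trivial, and then divides by a trivializing family $a_\alpha$ --- which is exactly the trivialization you exhibit explicitly.
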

\begin{proof}
In a local coordinate system  $(x_\alpha,y_\alpha)$ in a neighborhood
$U_\alpha$ of a point $p_\alpha$ of $\sigma$ in $S$ one can write the
$\mathcal G$-action as follows
$$
\psi (t, (x_\alpha,y_\alpha))=(x_\alpha,t^qy_\alpha),
$$
where $\sigma\cap U_\alpha=\{y_\alpha=0\}$. The meromorphic function
$u_\alpha=x^{-n}_\alpha y_\alpha $, where $n=n_i$ if $p=p_i$ for some
$i$ and $n=0$ otherwise, satisfies the conditions $\emph 1,\emph 2$ in
$U_\alpha$. We define $u_{\alpha\beta}:=\frac{u_\alpha}{u_\beta}$.
Now $L:=\{u_{\alpha\beta}\}\in H^1(S,\pi^{-1}
\O_\sigma^*)=H^1(\sigma,\O_\sigma^*)$, where $\pi :S\to \sigma$ is the
projection along the fibers. On the other hand, the line bundle associated
to $\sigma$ in $S$ and then restricted to $\sigma$ is the normal bundle of
$\sigma$ in $S$ and so by definition $L$ restricted to $\sigma$ is the trivial
bundle. This means that there are $a_\alpha\in\pi^{-1}\O^*_\sigma(U_\alpha)$
such that
$u_{\alpha\beta}=\frac{a_\alpha}{a_\beta}$.
Now, $\frac{u_\alpha}{a_\alpha}$ define  a meromorphic function on $S$
with the desired properties.
\end{proof}

\begin{Remark}\rm
In the case in which we have a transverse foliation $\F$ without
any transverse $\psi$ action, the linearization of $\F$
requires $\sigma\cdot \sigma< min(2-2g,0)$, where $g$ is the genus of
$\sigma$ (see \cite{Camachoetal, camo03}). In this case, in order to
construct $u$ with the first property we used this hypothesis
and proved that the restriction map ${\rm Pic}(X)\to{\rm Pic }(\sigma)$
is injective.  As we saw in the proof of Lemma~\ref{Lemma:daava},
in the presence of a transverse $\mathcal G$-action we do not need
any hypothesis on $\sigma\cdot\sigma$.

\end{Remark}
\subsection{Holomorphic equivalence of neighborhoods}

Now we consider two embeddings of $\sigma$ with transverse foliations.
\begin{Lemma}
\label{manooo} Let $\sigma$ be a Riemann surface embedded in two
surfaces $S_i,\ i=1,2$ and let $\F_i$ be a foliation transverse to
$\sigma$ on $S_i$ induced by a $1$-form $\omega_i$ such that the divisor
of $\omega_i$ is $\sigma+K_i$, where $K_i$ is $\F_i$-invariant and $K_1$
and $K_2$ restricted to $\sigma$ coincide. Then there is a unique
biholomorphism $h:(S_1,\sigma)\to (S_2,\sigma)$ such that
$h^{*}\omega_2=\omega_1$.
\end{Lemma}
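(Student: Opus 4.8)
The plan is to build the biholomorphism $h$ leaf by leaf, using the transverse foliations $\F_1,\F_2$ to identify $(S_1,\sigma)$ and $(S_2,\sigma)$ first at the level of the base $\sigma$ (both restrict to the same Riemann surface) and then along the fibers of the foliation. First I would fix a point $p\in\sigma$ and choose a common coordinate $x$ on $\sigma$ near $p$. By Lemma~\ref{28mar07}, applied to $\omega_i$ with the integer $n$ equal to the multiplicity of $K_i$ along the leaf $L_0^{(i)}$ through $p$ (and here is where the hypothesis $K_1|_\sigma=K_2|_\sigma$ enters: these multiplicities agree, since $K_i$ is $\F_i$-invariant and so $K_i\cap\sigma$ is exactly the divisor $\sum n_j p_j$ read off on $\sigma$), there is a \emph{unique} coordinate system $(\tilde x,\tilde y)=(x_i,y_i)$ on $(S_i,\sigma)$ near $p$ with $x_i|_\sigma=x$ and $\omega_i=x_i^{\,n}y_i\,dx_i$. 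Then the local map $h_p:(S_1,\sigma)\to(S_2,\sigma)$ defined near $p$ by $(x_1,y_1)\mapsto(x_2,y_2)$ — i.e. $x_2\circ h_p=x_1$, $y_2\circ h_p=y_1$ — manifestly satisfies $h_p^*\omega_2=\omega_1$, is a biholomorphism onto a neighborhood of $p$ in $S_2$, and is the identity on $\sigma$.

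The second step is to glue these local pieces. Cover $\sigma$ by coordinate charts $\{U_\alpha\}$ with coordinates $x_\alpha$, and on each $U_\alpha$ produce $h_\alpha$ as above. On an overlap $U_\alpha\cap U_\beta$ the two candidate maps $h_\alpha$ and $h_\beta$ both pull $\omega_2$ back to $\omega_1$ and both restrict to the identity on $\sigma$; I claim they agree. Indeed $h_\beta\circ h_\alpha^{-1}$ is a biholomorphism of a neighborhood of a point of $\sigma$ in $S_2$ fixing $\sigma$ pointwise and preserving $\omega_2$; writing $\omega_2$ in the normal form $x_2^{\,n}y_2\,dx_2$ of Lemma~\ref{28mar07}, the uniqueness clause of that lemma — precisely the statement that the only such local biholomorphism is the identity — forces $h_\beta\circ h_\alpha^{-1}=\mathrm{id}$. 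Hence the $h_\alpha$ patch to a single biholomorphism $h$ defined on a neighborhood of $\sigma$ in $S_1$ with image a neighborhood of $\sigma$ in $S_2$, satisfying $h^*\omega_2=\omega_1$ and $h|_\sigma=\mathrm{id}$. Global uniqueness of $h$ follows the same way: any two such global maps differ by a local-$\sigma$-fixing $\omega_2$-preserving biholomorphism, which is the identity.

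The main obstacle — and the place needing real care — is the compatibility of the normal forms across charts, and in particular making sure the integer $n$ in Lemma~\ref{28mar07} is chosen consistently on each surface and agrees between $S_1$ and $S_2$. The foliation $\F_i$ is transverse to $\sigma$, so through a generic point of $\sigma$ the leaf of $\F_i$ is not contained in the support of $K_i$; but $K_i$ may meet $\sigma$ at finitely many points $p_j$, and near such a point the $\F_i$-invariant divisor $K_i$ contributes a leaf $L_0^{(i)}$ through $p_j$ with some multiplicity $n_j$. The hypothesis $K_1|_\sigma=K_2|_\sigma$ says these finitely many points and multiplicities coincide, so the same $n$ works for both surfaces at each point; away from the $p_j$ one has $n=0$ and $\omega_i=y_i\,dx_i$, the generic normal form. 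One must check that the local coordinate $y_i$ produced by Lemma~\ref{28mar07} transforms correctly under change of the base coordinate $x$, i.e. that the transition functions for the $y_i$ on $S_1$ and on $S_2$ are literally the same cocycle — which again is forced by the uniqueness in Lemma~\ref{28mar07} once one fixes $x|_\sigma$ — so that $h$ is well defined globally and not merely locally. Once this bookkeeping is done the rest is formal.
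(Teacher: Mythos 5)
Your proof is correct and follows essentially the same route as the paper: apply Lemma~\ref{28mar07} at each point of $\sigma$ to obtain a unique local biholomorphism fixing $\sigma$ pointwise with $h^*\omega_2=\omega_1$, and then glue these local maps using that same uniqueness. You are somewhat more explicit than the paper about where the hypothesis $K_1|_\sigma=K_2|_\sigma$ enters (matching the integer $n$ on the two surfaces), but this is the paper's argument.
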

\begin{proof}
Using Lemma \ref{28mar07}  we conclude that for a point $a\in \sigma$
there is a unique $h:(S_1,\sigma,a)\to (S_2,\sigma,a)$ such that $h$
restricted to $\sigma$ is the identity map and
$h^{*}\omega_2=\omega_1$. The uniqueness implies that these local
biholomorphisms coincide in their common domains and so they give
us a global biholomorphism $h:(S_1,\sigma)\to (S_2,\sigma)$ with the
desired property.




\end{proof}
\subsection{Proof of the linearization theorem}
Let us now prove Theorem~\ref{mai}. Take $i=1,2$ . Let $\sigma$ be a
Riemann surface embedded in two surfaces $S_i$ and let $\psi_i$ be a
transverse $\mathcal G$-action on $(S_i,\sigma)$ with the multiplicity
$q$ and corresponding foliation $\F_i$.
By Lemma \ref{Lemma:daava} we can construct a  $1$-form $\omega_i$ with
the properties $\emph{1, 2, 3}$. By construction of $\omega_i$, if
$div(\omega_i)=\sigma+K_i$ then $K_i$ restricted to $\sigma$ depends
only on $\tilde v$ and $s$ and so we can take the $K_i$'s so that
$K_1\mid_\sigma=K_2\mid_\sigma$. Now Lemma 5 implies that
there is a unique biholomorphism $h:(S_1,\sigma)\to (S_2,\sigma)$ such
that $h^{*}\omega_2=\omega_1$. We claim that $h$ conjugates also the
$\psi_i$'s. Fix $t\in \mathcal G$ and let $\psi_{i,t}:(S_i,\sigma)\to
(S_i,\sigma)$ be a biholomorphism defined by
$$
\psi_{i,t}(a):=\psi_i(t, a),\ a\in (S_i,\sigma).
$$
We have
$$
h^{*}\psi_{2,t}^{*}\omega_2=h^{*}t^q\omega_2=t^q\omega_1=
\psi_{1,t}^{*}\omega_1=
\psi_{1,t}^{*}h^{*}\omega_2.
$$
Since by Lemma \ref{manooo} the sole $f:(S_2,\sigma)\to(S_2,\sigma)$ such
that $f^{*}\omega_2=\omega_2$ is the identity map, we conclude that
$h^{*}\psi_{2,t}^{*}=\psi_{1,t}^{*}h^{*}$ and so
$h(\psi_1(t,a))=\psi_2(t, h(a))$.
\section{Linearization in the attraction basin}
\label{Section:basin}

In this section we associate to the foliation $\tilde \fa_\vr$ a
{\it linear model} and prove a linearization result based on the
existence of the $\mathcal G$-action transverse to $\sigma_0$.

\subsection{The linear model}
We can associate to the pair $(\tilde\fa_\vr,\tilde V)$
a linear model constructed as follows. Let $L$ be the normal bundle
of
$\sigma_0$ in $\tilde V$.
 We denote by $L^{-1}$ the dual of $L$. We can glue $L$ and
$L^{-1}$ together and obtain a compact projective manifold $\bar
L$ in the following way: Let $\{U_{\alpha}\}_{\alpha\in I}$ be an
open covering of $\sigma_0$ and $z_\alpha$ (resp. $z_\alpha'$) a
holomorphic without zero section of $L$  (resp. $L^{-1}$) on
$U_\alpha$. Then
$$
z_\alpha=g_{\alpha\beta}z_{\beta},\
z_\alpha'=g_{\alpha\beta}^{-1} z_{\beta}',\
L=\{g_{\alpha\beta}\}_{\alpha,\beta\in I} \in H^1(S,\O^*).
$$
For a point $a\in L_p, p\in U_{\alpha},\ a\not =0_p$ we define the
point $\frac{1}{a}\in L^{-1}_p$ by setting
$$
\frac{1}{a}=\frac{z_\alpha(p)}{a}z_\alpha'(p).
$$
The map $a\rightarrow 1/a$ does not depend on the chart $U_\alpha$
and gives us a biholomorphism between
$L-\sigma_0$ and $L^{-1}-{\sigma_\infty}$,
where $\sigma_0$ (resp. $\sigma_\infty$) is the zero section of $L$
(resp. $L^{-1}$).

For each point $r_i^{0}=r_i\in\sigma_0, \ i=1,2,\ldots,s$ we denote
by $r_i^\infty$ the unique intersection point of $\sigma_\infty$
and  $\bar{L}_{r_i^0}$. By various blow ups starting from
$r_i^\infty$  in the chain $\sigma_0, \bar{L}_{r_i^0},\sigma_\infty$,
we can  create a chain of divisors
$$
\sigma_0, \sigma_1^i, \sigma_2^i,\ldots,\sigma_{n_i}^i, \tilde \sigma,
 \tau_{m_i}^i, \tau_{m_i-1}^i, \ldots, \tau_{1}^i, \sigma_\infty
$$
such that
$$
\sigma_j^i\cdot \sigma_j^i=-k^i_j, \ j=1,2,\ldots,n_i,
\ \ \tilde \sigma\cdot\tilde \sigma=-1,
\ \ \ -l_{j}^i:=\tau^i_j\cdot \tau^i_j<-1, j=1,2,\ldots, m_i.
$$
The chain of self-intersections of the divisors in the blow-up process
is given by:
$$
(-k,0,k), (-k,-1,-1,k-1), (-k,-2,-1,-2,k-1),\ldots, (-k,  -k_1^i,-1,
\underbrace{-2,\cdots, -2}_{k_1^i-1 \hbox{ times }}, k-1)
$$
$$
(-k,-k_1^i,-2,-1,-3, \underbrace{-2,\cdots,-2}_{k_1^i-2
\hbox{ times }}, k-1), \cdots, (-k,-k_1^i,-k_2^i, \cdots,
-k_{n_i}^i,-1, l_{m_i}^i,\cdots, l_{2}^i,l_{1}^i, k-1).
$$
Repeating this construction at each point $r_i, \, i=1,...,s$ we
obtain a surface $X$. Let
$$
D_\infty=\sigma_\infty+\sum_{i=1}^s\sum_{j=1}^{m_i}
\tau_j^i, \
D_0=\sigma_0+\sum_{i=1}^s\sum_{j=1}^{n_i}\sigma_j^i.
$$
Now, $\tilde {\cal V}:= X-D_\infty$ is the desired linear model
variety. In $\bar L$ we have a canonical $\bc^*$ action whose
orbits are the fibers of $L$. It gives us a $\bc^*$-action
$\tilde \lambda$ on $\tilde{\mathcal V}$.
 We denote by $\tilde{\mathcal F}_\lambda$ the associated
foliation on $\tilde {\mathcal V}$. The pair
$(\tilde{\mathcal V}, \tilde {\mathcal F}_\lambda)$
will be called the {\it linear approximation} of
$(\tilde V,\tilde \fa_\vr)$.

In order to proceed with our discussion we need some definitions:
A divisor $Y=\sum_{i=1}^lY_i$  in a two-dimensional surface $X$
is a support of a divisor with positive (resp. negative) normal
bundle if there is a divisor $\tilde Y:=\sum_{i=1}^l a_iY_i $,
where the $a_i,\ i=1,2,\ldots,l$ are positive integers, such that
$\tilde Y\cdot Y_j>0( \text{ resp. }  <0)$, for $  j=1,\ldots,l$.

We say that the normal bundle of the divisor $\tilde Y$ in $X$
is positive (resp. negative). Observe that the normal bundle $N$ of
a divisor is positive (resp. negative) if and only if $N$ restricted
to each irreducible component of the divisor is positive
(resp. negative) (see \cite{Hartshorne} Proposition 4.3).
In fact the above number is the Chern class of $N\mid_{Y_i}$
(see \cite{Laufer} p. 62).

\begin{Lemma}
\label{23.4.07}
The following assertions are equivalent:
\begin{enumerate}
\item
The divisor $D_\infty$ is a support of divisor with positive
normal bundle.
\item
The self-intersection matrix of $D_0$ is negative definite.
\item
$$
\sum_{i=1}^s\frac{1}{[k^i_{1}, k^i_{2}, \ldots, k^i_{n_i}]}<k.
$$
\item
$$
\sum_{i=1}^s\frac{1}{[l^i_{1}, l^i_{2}, \ldots,l^i_{m_i}]}>s-k.
$$
\end{enumerate}
\end{Lemma}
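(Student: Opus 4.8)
The plan is to prove the cycle of implications $1\Rightarrow 2\Rightarrow 3\Rightarrow 4\Rightarrow 1$, using throughout the explicit chain of self-intersection numbers displayed in the construction of $X$ and standard facts about intersection forms on trees of rational curves. The key computational device is the continued-fraction identity that already appeared in the proof of Lemma~\ref{Lemma:desingularization}: a linear chain of curves with self-intersections $-k_1,-k_2,\ldots,-k_n$ has negative definite intersection matrix iff all the truncated continued fractions $[k_1],[k_1,k_2],\ldots,[k_1,\ldots,k_n]$ are positive, and the ``index'' contributed at the far end of the chain through the normal-bundle/adjunction bookkeeping is $-[k_n,k_{n-1},\ldots,k_1]$. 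I will use the symmetry of continued fractions, $[k_1,\ldots,k_n]=[k_n,\ldots,k_1]\cdot(\text{det ratio})$, more precisely that for a chain the quantity $1/[k_1,\ldots,k_n]$ equals (the absolute value of) the ratio of the determinant of the matrix with the first row and column deleted to the determinant of the full matrix.

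For $2\Leftrightarrow 3$: the self-intersection matrix of $D_0=\sigma_0+\sum_{i,j}\sigma^i_j$ has $\sigma_0$ (self-intersection $-k$) connected to $s$ disjoint linear chains. By the block elimination argument already used in Lemma~\ref{Lemma:desingularization} (choosing $Q$ to clear off-diagonal entries chain by chain), one reduces each chain $\sigma^i_1,\ldots,\sigma^i_{n_i}$ to the diagonal form $\mathrm{diag}(-k^i_1,-[k^i_2,k^i_1],\ldots,-[k^i_{n_i},\ldots,k^i_1])$, and simultaneously the $(\sigma_0,\sigma_0)$ entry becomes $-k+\sum_{i=1}^s 1/[k^i_1,k^i_2,\ldots,k^i_{n_i}]$. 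Hence the full matrix is negative definite iff each $[k^i_1,\ldots,k^i_j]>0$ (automatic, since these chains come from a genuine resolution, or in the abstract model from the $k^i_j\ge 2$ condition via the standard induction) and the reduced $\sigma_0$-entry is negative, i.e. iff inequality~3 holds.

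For $1\Leftrightarrow 4$: this is the mirror statement applied to $D_\infty=\sigma_\infty+\sum_{i,j}\tau^i_j$. Here one uses the dual picture in $\bar L$, where $\sigma_\infty\cdot\sigma_\infty=+k$ and the $\tau$-chains have self-intersections $-l^i_j$. Saying $D_\infty$ supports a divisor with positive normal bundle is, by the remark preceding the lemma, equivalent to the \emph{negative} definiteness of the intersection matrix of $-D_\infty$ restricted appropriately — concretely, one runs the same Gauss elimination on the $\tau$-chains, collapsing them to $\mathrm{diag}(-l^i_1,-[l^i_2,l^i_1],\ldots)$ and turning the $\sigma_\infty$-entry into $k-\sum_{i=1}^s 1/[l^i_1,\ldots,l^i_{m_i}]$; positivity of the normal bundle of the contracted divisor amounts to this last number being $>s-$ (something), and a careful count of how many $(-1)$- and $(-2)$-curves sit between the $\sigma$-chain and the $\tau$-chain in the displayed blow-up sequence produces exactly the ``$s-k$'' on the right-hand side of~4. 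The bookkeeping here is where I expect the arithmetic to be most delicate: one must track, across the explicit list $(-k,-k^i_1,-k^i_2,\ldots,-k^i_{n_i},-1,l^i_{m_i},\ldots,l^i_1,k-1)$, that the continued fraction of the whole ``$\sigma$-side plus central $(-1)$'' chain is the reciprocal-complementary partner of the ``$\tau$-side'' chain, which is precisely the classical fact that blowing down a $(-1)$-curve between two chains replaces the pair $([k\text{-chain}],[l\text{-chain}])$ by a single chain and that $[k^i_1,\ldots,k^i_{n_i}]$ and $[l^i_1,\ldots,l^i_{m_i}]$ satisfy $[k^i_1,\ldots,k^i_{n_i}]+[l^i_1,\ldots,l^i_{m_i}]=[k^i_1,\ldots,k^i_{n_i}]\cdot[l^i_1,\ldots,l^i_{m_i}]$, equivalently $\tfrac{1}{[k\text{-chain}]}+\tfrac{1}{[l\text{-chain}]}=1$. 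Granting this identity, $3\Leftrightarrow 4$ is immediate: substitute $1/[l^i_1,\ldots,l^i_{m_i}]=1-1/[k^i_1,\ldots,k^i_{n_i}]$ into~3, sum over $i$, and rearrange.

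The main obstacle, then, is not any single implication but establishing cleanly the ``reciprocal chain'' identity $\tfrac{1}{[k^i_1,\ldots,k^i_{n_i}]}+\tfrac{1}{[l^i_1,\ldots,l^i_{m_i}]}=1$ directly from the inductive blow-up recipe $(-k,0,k)\to(-k,-1,-1,k-1)\to\cdots$ displayed above. I would prove it by induction on the total number of blow-ups: each elementary blow-up either prepends a $-2$ to (or lengthens) one of the two chains and decreases an adjacent entry by $1$, and a short continued-fraction computation shows the pair of continued fractions transforms so that $1/[k]+1/[l]$ is preserved, with base case the chain $(0)$ versus the empty chain, where the identity reads $1/[\ ]$-conventions give $0+1=1$. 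Once that is in hand, all four equivalences follow by assembling the two Gauss-elimination computations and the substitution, with the negativity/positivity of the central entries ($-k+\sum 1/[k\text{-chain}]$ on one side, $k-\sum 1/[l\text{-chain}]$ on the other) being literally the same inequality written two ways.
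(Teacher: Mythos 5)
Your treatment of $2\Leftrightarrow 3$ (Gauss elimination of the chains, reducing the $\sigma_0$-entry to $-k+\sum_i 1/[k^i_1,\ldots,k^i_{n_i}]$) and of $3\Leftrightarrow 4$ (substituting the reciprocal identity $\tfrac{1}{[k^i_1,\ldots,k^i_{n_i}]}+\tfrac{1}{[l^i_1,\ldots,l^i_{m_i}]}=1$) coincides with the paper's argument; your plan to prove the reciprocal identity by induction on the blow-up sequence is a workable alternative to the paper's shortcut, which gets the identity in one line from the Camacho--Sad index theorem applied to the $(-1)$-curve $\tilde\sigma$ (sum of indices $-1/[k^i_{n_i},\ldots,k^i_1]-1/[l^i_{m_i},\ldots,l^i_1]=\tilde\sigma\cdot\tilde\sigma=-1$) and uses the blow-up induction only to justify reversing the order of the continued fractions.

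The genuine gap is in the link between condition 1 and the arithmetic conditions. You assert that ``$D_\infty$ supports a divisor with positive normal bundle'' is ``equivalent to the negative definiteness of the intersection matrix of $-D_\infty$ restricted appropriately.'' Read literally this is false: the $\tau^i_j$ have self-intersection $-l^i_j<-1$, so the intersection matrix of $D_\infty$ always has negative diagonal entries and $-D_\infty$'s matrix is never negative definite, yet condition 1 can hold. The correct statement is not a definiteness condition but the existence of a strictly positive integer vector $a$ with $Ma>0$, where $M$ is the intersection matrix of $D_\infty$; this needs a separate argument in each direction, and neither is supplied. The paper proves $4\Rightarrow 1$ by explicitly constructing the coefficients $a,a^i_j$ and translating the inequalities $-l^i_ja^i_j+a^i_{j-1}+a^i_{j+1}>0$ into a descending chain of continued-fraction inequalities whose solvability is exactly condition 4 (one could alternatively invoke a Perron--Frobenius argument, but you must say so), and it proves $1\Rightarrow 2$ via Hartshorne's contraction theorem (the morphism contracting $D_0$ forces its intersection matrix to be negative definite) --- an input absent from your sketch. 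Finally, the central entry for $\sigma_\infty$ after the blow-ups is $k-s$, not $k$, so the reduced entry is $k-s+\sum_i 1/[l^i_1,\ldots,l^i_{m_i}]$; you flag this bookkeeping as ``delicate'' but it is precisely the computation that produces the $s-k$ in condition 4, so it cannot be deferred.
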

\begin{proof}
$\it 1\Rightarrow 2$. From \cite{Hartshorne} Theorem 4.2 it follows
that one can make a blow down of the  divisor $D_0$ and so the self
intersection matrix of $D_0$ is negative definite.

$\it 2\Rightarrow 3$.  We remark that the diagonalization of the
intersection matrix of $D_0$ by the procedure given in Lemma 1
leads to
$$
{\rm diag}(\ldots, -k_{n_i}^i, -[k_{n_i-1}^i, k_{n_i}^i],...,
-[k_{1}^i,k_{2}^i,...,k_{n_i}^i], \ldots, -k+ \sum_{i=1}^s
\frac{1}{[k^i_{1}, k^i_{2}, \ldots, k^i_{n_i}]}).
$$
Recall that $k_{j}^i> 1$ for $i=1,\ldots,s ; j=1,\ldots,n_i$.

$\it 3\Rightarrow 4$. Using the index theorem we have
$$
\frac{1}{[k^i_{n_i}, k^i_{n_{i}-1}, \ldots, k^i_{1}]}+
\frac{1}{[l^i_{m_i}, l^i_{m_i-1}, \ldots,l^i_{1}]}=1.
$$
Notice that the order of the continued fraction is the inverse of
the one we need. However we have that:
if
$$
-k, -k_{1}^i,-k_{2}^i,\ldots, -k^i_{n_i},-1, -l^i_{m_i},-l^i_{m_i-1},
\ldots,-l^i_{1},k-1
$$
is  obtained by blow-ups as we explained then
$$
-k, -k_{n_i}^i,-k_{n_i-1}^i,\ldots, -k^i_1,-1, -l^i_1,-l^i_2,
\ldots,-l^i_{m_i},k-1
$$
is also obtained by blow-ups. This can be proved by induction on
the number of blow-ups. Notice that to create each branch of the
star we have done only one blow-up centered at a point of
$\sigma_\infty$ (the first blow-up) and so after obtaining the
desired star the self intersection of $\sigma_0$ is $k-s$.

$\it 4\Rightarrow 1$. We are looking for natural
numbers $a$ and $a_j^{i},\ j=1,2,\ldots, m_i,\ i=1,2,\ldots,s$ such
that the normal bundle of $\tilde Y=a\sigma_\infty+
\sum_{i=1}^s\sum_{j=1}^{m_i} a_{j}^i\tau_j^i$ is ample, i.e
$\tilde Y\cdot \sigma>0$ for $\sigma=\sigma_\infty$ and all
$\sigma_{j}^i$. These inequalities are translated into:
$$
-l^{i}_ja^i_j+a^i_{j-1}+a^i_{j+1}>0, a_{0}^i:=n, \ a_{m_i+1}^i:=0,
$$
$$
a(k-s)+\sum_{i=1}^s a_{m_i}^i>0.
$$
We rewrite these inequalities in the following way:
$$
\frac{a}{a_{1}^i}>[l_{1}^i, \frac{a_{1}^i}{a_{2}^i}]>
\ldots>
[l_{1}^i, l_{2}^i,\ldots, l_{m_i-1}^i, \frac{a_{m_i-1}^i}{a_{m_i}^i}]>
[l_{1}^i, l_{2}^i,\ldots, l_{m_i-1}^i, l_{m_i}^i],
$$
$$
\sum_{i=1}^s \frac{1}{\frac{a}{a_{1}^i}}>s-k.
$$
The existence of positive rational numbers $\frac{a^i_j}{a_{j-1}^i}$
follows from the hypothesis 4. Notice that $l^i_j$ are all greater
than $1$ and so the $[l_{1}^i, l_{2}^i,\ldots, l_{m_i-1}^i, l_{m_i}^i]$'s
are positive.
\end{proof}

We denote by $\mathcal{V}$  the variety obtained by the blow down of the
divisor $D_0$ in $\tilde{\mathcal{V}}$. We also denote by $\lambda$ the $\bc^*$-action
on $\mathcal V$ corresponding to $\tilde \lambda$ in $\tilde{\mathcal{V}}$.
\begin{Proposition}
\label{affine}
The variety  $\mathcal{V}$ is affine algebraic
and the $\bc^*$- action $\lambda$ is given
by a good action in some affine coordinates.
\end{Proposition}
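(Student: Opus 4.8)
The plan is to produce explicit global coordinates on $\mathcal V$ that exhibit it as an affine variety on which $\lambda$ acts as a good action, by pulling back functions from the linear model $\bar L$ and blowing down $D_0$. First I would recall that $\mathcal V$ is obtained from $\tilde{\mathcal V}=X-D_\infty$ by contracting $D_0$; by Lemma \ref{23.4.07} the self-intersection matrix of $D_0$ is negative definite (this is part of the standing hypothesis, equivalent to condition $\it 3$), so by Grauert's criterion the contraction exists and $\mathcal V$ is a normal analytic space with one singular point, the image of $D_0$. The $\bc^*$-action $\tilde\lambda$ on $\bar L$ has the fibres of $L$ as its one-dimensional orbits and $\sigma_0,\sigma_\infty$ as fixed-point curves; the blow-ups creating the $\sigma_j^i$ and $\tau_j^i$ are centred at fixed points, so $\tilde\lambda$ lifts to $X$, restricts to $\tilde{\mathcal V}$, and descends through the contraction to the action $\lambda$ on $\mathcal V$.

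Next I would build the coordinate functions. On $\bar L$ consider meromorphic functions that are homogeneous for $\tilde\lambda$: a function $f$ with $\tilde\lambda_t^*f=t^m f$ corresponds to a meromorphic section of $L^{\otimes m}$ (pulled back from $\sigma_0$ via the bundle projection), and its divisor on $\bar L$ is $m\sigma_0-m\sigma_\infty$ plus the pullback of $\div$ of that section on $\sigma_0$. Using the negative-definiteness from Lemma \ref{23.4.07} (equivalently, that $D_\infty$ supports a divisor with positive normal bundle, hence some multiple is very ample on a neighborhood of $D_\infty$ in $X$), I would choose finitely many such homogeneous functions $f_0,\dots,f_n$, of weights $q_0,\dots,q_n\in\N$, whose only poles lie along $D_\infty$ and which separate points and tangent directions on $\tilde{\mathcal V}=X-D_\infty$. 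After the contraction of $D_0$ each $f_j$ pulls back to a global holomorphic function on $\mathcal V$ (holomorphic across the singular point because $D_0$ gets crushed to a point and $f_j$ is bounded there), and the map $h=(f_0,\dots,f_n):\mathcal V\to\bc^{n+1}$ is a proper injective immersion off the singular point, hence an embedding onto a closed analytic — therefore, by Chow/GAGA for the projective $X$, algebraic — subvariety. Since $f_j\circ\lambda_t=t^{q_j}f_j$, in these coordinates $\lambda$ becomes $\sigma_Q(t,(z_0,\dots,z_n))=(t^{q_0}z_0,\dots,t^{q_n}z_n)$, i.e. a good action, once one checks the $q_j$ are all strictly positive — which follows because $\sigma_0$ (weight $0$) lies in $D_0$ and is contracted, so every nonconstant $f_j$ must vanish on the image of $D_0$ and thus have $q_j\ge 1$.

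The main obstacle will be the choice of the homogeneous functions $f_0,\dots,f_n$: one must simultaneously arrange that their polar divisors are supported on $D_\infty$ (so they descend to holomorphic functions after blowing down $D_0$), that they are positively weighted, and that collectively they embed $\mathcal V$. This is exactly where the inequality $\sum_i 1/[k_1^i,\dots,k_{n_i}^i]<k$ enters through Lemma \ref{23.4.07}: it guarantees that $D_\infty$ carries a positive normal bundle, so a high tensor power of the associated line bundle is very ample near $D_\infty$ and has enough sections with the required homogeneity (sections of powers of $L$ twisted to have poles only on $D_\infty$), giving both the separation of points and the affineness of $\mathcal V=X-D_\infty$. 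I would also need the small verification that the embedding image is algebraic; this is immediate since $X$ is projective and $h$ extends to a morphism on $X$, so $h(\mathcal V)$ is a Zariski-open subset of a projective variety and, being closed in $\bc^{n+1}$, is affine algebraic.
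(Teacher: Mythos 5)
Your plan is correct and follows essentially the same route as the paper: both use Lemma \ref{23.4.07} to get that $D_\infty$ supports a divisor $Y$ with positive normal bundle, then take a $\bc^*$-homogeneous basis of $H^0(X,\mathcal{O}_X(aY))$ (the paper invokes Theorem 4.2 of Hartshorne for the resulting birational morphism contracting $D_0$ and embedding $X-D_\infty$) to obtain affine coordinates in which $\lambda$ is diagonal with positive weights. Your added verification that the weights $q_j$ are strictly positive is a detail the paper leaves implicit, but it is not a different argument.
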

\begin{proof}
Since the self intersection matrix of $D_0$ is negative definite,
by Lemma \ref{23.4.07}  we have that $D_\infty$ is
the  support of a divisor $Y$ with positive normal bundle.
By \cite{Hartshorne} Theorem 4.2 there exists a birational morphism
$f:X\rightarrow \tilde X\subset \mathbb P^\nu$ such that $f$ is an
isomorphism in a Zariski open neighborhood of
$D_\infty$ and $af(Y)$
for some big positive integer  $a$ is a hyperplane section.
We have $f=[f_0:f_1:\ldots :f_\nu]$, where $f_0,f_1,\ldots, f_\nu $ is
a $\bc$-basis of $H^0(X, \mathcal{O}_X(aY))$ for $a>0$ big enough.
Here $\mathcal{O}_X(aY)$ is the sheaf of meromorphic functions $u$
on $X$ with $div(u)+aY>0$. Since $\mathbb C^*$ acts on $H^0(X,
\mathcal{O}_X(aY))$ we can take
$f_i$'s  such that
$f_i(\lambda (x,t))=t^{q_i}f_i(x)$ for some $q_i\in \mathbb{N}$.
It turns out that $f$ is an isomorphism in $X-D_0$ and the divisor
$D_0$ is mapped to a point of $p\in \mathcal{V}$.
\end{proof}


\subsection{Existence of a global linearization}
We introduce the \emph{attraction basin} $B_p$ of $p$, by the flow
$\vr$, as
$$
B_p=\{\vr(t,z); t\in \mathbb{C^*};
z\in U \},
$$
where $U\subset V$ is the image of a neighborhood  $\tilde U$ of $\sigma_0$ in
$\tilde V$ by the resolution map $\rho$. The theorem of Suzuki([25])
asserts that the foliation $\fa_\vr$ admits a meromorphic first
integral. This implies that the singularities of $\tilde\fa_\vr$ are
linearizable and together with Theorem \ref{Theorem:desingularization}
that $B_p$ contains an open neighborhood of $p$. This fact
will be proved again during the construction of the conjugacy map between
$\bc^*$-actions. We aim to construct a conjugacy between $\vr$ on $B_p$
and $\lambda$ on $\mathcal V$ establishing the following theorem:

\begin{Theorem}
\label{Theorem:conjugacybasins}
 The set $B_p$ is an open subset of $V$ and there is a biholomorphism
$h: B_{p}\to \mathcal V$ which is a conjugacy between the actions
$\vr$ and $\lambda$, i.e.,
$$
h(\vr(t,z))=\lambda(t,h(z)),
\text{ for every } (t,z)\in \bc^*\times B_{p}.
$$
 \end{Theorem}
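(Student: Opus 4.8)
```latex
\textbf{Proof proposal.}
The plan is to compare the resolution $\tilde V$ of the action $\vr$ with the
linear model $\tilde{\mathcal V}$ constructed in \S\ref{Section:basin}, and then
to descend the comparison through the two blow-down maps. The key point is that
both pairs $(\tilde V,\tilde\fa_\vr)$ and $(\tilde{\mathcal V},\tilde{\mathcal F}_\lambda)$
carry a $\bc^*$-action which is transverse to the central Riemann surface
$\sigma_0$ (with the \emph{same} genus $g$, the \emph{same} normal bundle $L$,
and the \emph{same} marked points $r_1,\dots,r_s$, by the choice of the linear
model); moreover, away from $\sigma_0$ the divisors attached to $\sigma_0$ are
identical linear chains of Riemann spheres with identical self-intersections
$-k^i_j$, by Theorem~\ref{Theorem:desingularization} and
Lemma~\ref{Lemma:desingularization}. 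So first I would invoke
Theorem~\ref{mai} (the Linearization Theorem) to produce a biholomorphism
$h_0:(\tilde U,\sigma_0)\to (N,\sigma_0)$ between a neighborhood of $\sigma_0$
in $\tilde V$ and a neighborhood of $\sigma_0$ in the normal bundle
$N=L$, conjugating $\tilde\vr$ with the canonical action $t^q a$; this is the
one place where the genus of $\sigma_0$ plays no role, which is exactly why
Theorem~\ref{mai} was stated without hypotheses on $\sigma_0\cdot\sigma_0$.

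Next I would \emph{extend} $h_0$ along the flow of the action. Concretely, the
linear model $\tilde{\mathcal V}=X-D_\infty$ is, near $\sigma_0$, a neighborhood
of the zero section of $L$ with the chains $\sigma^i_j$ attached; by
Theorem~\ref{mai} it is biholomorphic to $(\tilde U,\sigma_0)$ conjugating the
two actions near $\sigma_0$. To push the conjugacy past the chains I would use
that each singularity $p_i=\sigma_i\cap\sigma_{i+1}$ of $\tilde\fa_\vr$ is a
\emph{simple} singularity (item 4 of Theorem~\ref{Theorem:desingularization}),
hence linearizable, and in linearizing coordinates the $\bc^*$-action is again of
the form $(t,(x,y))\mapsto (t^{a}x,t^{b}y)$ with the exponents fixed by the
self-intersection data; Suzuki's theorem (the existence of a meromorphic first
integral of $\fa_\vr$, cited in the text) guarantees these local linearizations
glue with the ones on the chains of the linear model. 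This produces a
biholomorphism $\tilde h:\tilde U'\to \tilde{\mathcal V}'$ on saturated
neighborhoods conjugating $\tilde\vr$ and $\tilde\lambda$, and since every point
of $B_p$ is by definition $\vr(t,z)$ with $z$ near $\sigma_0$ and every point of
$\mathcal V$ (minus the blow-down point) is $\lambda(t,w)$ with $w$ near the
image of $\sigma_0$, the conjugacy relation forces $\tilde h$ to extend uniquely
to all of $\rho^{-1}(B_p)$ by the rule $\tilde h(\tilde\vr(t,z))=\tilde\lambda(t,\tilde h(z))$,
provided one checks this is well defined (i.e.\ independent of the representation
$(t,z)$), which follows from the conjugacy already established near $\sigma_0$
together with the transversality of the flow.

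Finally I would descend: $\rho:\tilde V\to V$ contracts
$D=\rho^{-1}(p)$ to $p$, while Proposition~\ref{affine} gives a birational
morphism contracting $D_0$ in $\tilde{\mathcal V}$ to the point
$p\in\mathcal V$, and both contracted divisors are exactly the same weighted
graph $\sigma_0\cup\bigcup\sigma^i_j$. By the uniqueness of contractions of a
fixed negative-definite configuration (Grauert), and because $\tilde h$ is
equivariant and biholomorphic on the complement of $D_0$, the map $\tilde h$
descends to a biholomorphism $h:B_p\to\mathcal V$ with $h(p)=p$, conjugating
$\vr$ and $\lambda$; the openness of $B_p$ in $V$ follows because $\tilde U$ is
open and saturating by the flow of a holomorphic action is an open operation,
and $\rho$ is a biholomorphism off $D$. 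I expect the main obstacle to be the
gluing step in the second paragraph: one must show that the local linearizing
coordinates at the chain singularities $p_i$ can be chosen \emph{compatibly}
with the transverse $\mathcal G$-linearization near $\sigma_0$ on one side and
with the explicit blow-up coordinates of the linear model on the other, so that
the partially defined conjugacies agree on overlaps and assemble into a single
$\tilde h$; this is where the hypothesis that $V$ is Stein (no compact leaves
crossing two dicritical components) and the simplicity of all the
$\tilde\fa_\vr$-singularities off $\sigma_0$ are essential, and it is the
delicate point that replaces the topological arguments of Orlik--Wagreich.
```
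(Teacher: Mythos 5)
Your overall strategy matches the paper's: linearize near $\sigma_0$ via Theorem~\ref{mai}, propagate the conjugacy along the orbits of the two actions, deal with the chain singularities, and descend through the blow-downs. The descent step and the openness of $B_p$ are handled essentially as you describe. However, there is a genuine gap at exactly the point you yourself flag as ``the main obstacle'': you propose to construct independent local linearizations at each chain singularity $p_i$ and then \emph{glue} them to the flow-extended conjugacy, appealing to Suzuki's theorem for compatibility, but you never carry this out, and Suzuki's theorem (existence of a meromorphic first integral, hence linearizability of the simple singularities) does not by itself make the locally chosen linearizing charts agree with the map already determined by flow-propagation on the complement of the separatrices.

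The missing idea is that no choice of compatible charts is needed: after flowing, the conjugacy $h$ is \emph{already defined} on a punctured neighborhood $\mathcal N\setminus\{x=0\}$ of each invariant manifold of $p_i$, and the real content is that this already-determined map extends holomorphically across $\{x=0\}$. This is the paper's Lemma~\ref{Lemma:siegelextension}: one first uses the induced conjugacy of holonomies (Mattei--Moussu, Martinet--Ramis) to reduce to a self-conjugacy $G$ of the linear vector field $Z_1=nx\frac{\partial}{\partial x}-my\frac{\partial}{\partial y}$ defined only off $\{x=0\}$, writes $G=(xu,yv)$ with $u,v$ given by Laurent series in $x$, and observes that the equation $G_*Z_1=Z_1$ forces $(ni-mj)u_{ij}=0$, so nonzero coefficients satisfy $ni=mj\geq 1$ and hence $i\geq 1$; thus no negative powers of $x$ occur and $G$ extends. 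Without this (or an equivalent) extension argument your construction of $\tilde h$ on all of $\rho^{-1}(B_p)$ is incomplete: the flow-propagated map is a priori only defined off the union of the separatrices of the $p_i$, which is precisely where the chains $\sigma_2,\dots,\sigma_n$ and the invariant curves emanating from $p_n$ live.
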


\begin{proof}

It will be enough to show that there is a conjugacy between
$\tilde \vr$ on $\tilde B_p:= \rho^{-1}(B_{p})$ and $\tilde \lambda$
on $\tilde {\mathcal V}$. We start by defining the conjugacy in a
neighborhood of $\sigma_0$. An immediate  consequence of Theorem
\ref{mai} is that there is a biholomorphic conjugacy
$h\colon \tilde U  \to \tilde {\mathcal U}$ between the restrictions
of $\tilde \vr$ and $\tilde \lambda$, where $\tilde U$
is a neighborhood of $\sigma_0$ in $\tilde V$ and $\tilde {\mathcal U}$
is a neighborhood of $\sigma_0$ in $\tilde {\mathcal V}$.
The conjugacy $h$ extends along the flows $\tilde \vr$ and $\tilde
\lambda$ as follows: For a point $z'\in \tilde B_p\backslash D$ there
is $t\in \bc^*$ with such that $z:=\vr(t,z')\in \tilde U$.
We define $h(z')$ by the equality $h(z')=\tilde \lambda(t^{-1},h(z))$.
It remains to extend $h$ to a neighborhood of the invariant manifolds
of the fixed points of $\tilde \vr$ in $\bigcup_ {i=1}^{r}\sigma_{i}$.
These points are all simple and lie in the linear chains starting at
$r_1,...,r_s$ in $\sigma_0$. Fix the linear chain starting at
$r_1$=$p_0$. The linear chain consists of a finite sequence of elements
of the divisor $\sigma_{0}, \sigma_{1},..., \sigma_{n}$ such that
$\sigma_{i}$, for $i\ne 0$ is a Riemann sphere, where the action
$\varphi_{1}$ is nontrivial, and $\sigma_{i}\cap\sigma_{i+1}= \{p_{i}\}$
is a simple singularity of $\tilde \vr$ for $i=1,..., n-1$. Since at each
$\sigma_{i}$, $i>0$, $\tilde \vr$ has two singularities, there is another
fixed point of $\tilde \vr$, $p_{n}\in \sigma_{n}$. The conjugacy $h$ is
already defined on $\sigma_{1}\backslash\{p_1\}$. The next lemma will
imply that $h$ extends to $\sigma_{2}\backslash \{p_2\}$. Proceeding by
induction and having already extended $h$ to $\sigma_{n}\backslash \{p_n\}$
the next lemma will apply again to extend $h$ to the remaining invariant
manifold of $p_{n}$. The same procedure can be followed on the other linear
chains starting at $r_1,..., r_s$ in $\sigma_0$. It only remains to prove
the following lemma.

\begin{Lemma}
\label{Lemma:siegelextension} Let $Z$ be a holomorphic
vector field defined in a neighborhood $\mathcal N$ of the origin $0\in
\bc^2$ and  $Z_1= nx\frac{\partial}{\partial x} - m y
\frac{\partial}{\partial y}$
with $n, m \in \mathbb N$ be its linear part. Suppose that
\begin{enumerate}
 \item
$x=0$ and $y=0$ are  separatrices of $Z$
\item There is an analytic conjugacy $h\colon \mathcal N\setminus
\{x=0\}\to \mathcal N\setminus \{x=0\}$ between $Z$ and $Z_1$, i.e.,
$h_*Z=Z_1$.
\end{enumerate}

Then $h$ extends to $\mathcal N$ as an analytic conjugacy between $Z$ and
$Z_1$.

\end{Lemma}
{\it Proof.}
The vector fields $Z$ and $Z_1$ are in the {\it Siegel domain}
(\cite{Arnold}), and the axes $\{x=0\}$ and $\{y=0\}$ are the only
local separatrices for $Z$ and $Z_1$. The conjugacy $h$ induces a
conjugacy between the  holonomies of the local separatrices $\{y=0\}$
and therefore by classical arguments (\cite{Mattei-Moussu},
\cite{Martinet-Ramis}) the foliations induced by the vector fields
$Z$ and $Z_1$ are analytically equivalent. Let $F\colon \mathcal N \to
\mathcal N$ be a biholomorphism such that $F_* Z =Z_1$. Then the map
$G=h\circ F^{-1}\colon \mathcal N\setminus \{x=0\} \to \mathcal N
\setminus \{x=0\}$
is a biholomorphism such  that $G_*Z_1=Z_1$. It is then enough to
show that such a self-conjugacy for $Z_1$ extends as a holomorphic
self-conjugacy to $\mathcal N$. This is proved as follows.
Write $G(x,y)=(x u, y v)$ for some holomorphic functions $u(x,y),
v(x,y)$ in $\mathcal N\backslash\{x=0\}$. From $G_*Z_1=Z_1$  we obtain that:
\[
nxu_x - myu_y=0,\, \,  \, \\
nxv_x - myv_y=0 \ \ \ \ (*)
\]
 Since $G$ is holomorphic in $\{y=0\}\setminus \{0\}$ we can write
in Laurent series
\[
u=\sum\limits_{i\in\mathbb Z \, j\in \mathbb N} u_{ij} x^i y^j \,
\, , \, \, v=\sum\limits_{i\in\mathbb Z \, j\in \mathbb N} v_{ij}
x^i y^j
\]
>From the above relations (*) we obtain
\[
(ni-mj)u_{ij}=0, \, (ni-mj)v_{ij}=0, \, \forall (i,j)\ne(0,0).
\]
Thus, $u_{ij}=0$ and $v_{ij}=0$ if $ni-mj\ne 0$. On the other
hand, if $ni-mj=0$ then $ni=mj\geq 1$ and therefore $i\geq 1$.
This shows that the Laurent series above only have positive powers.
Therefore $G$ extends as a holomorphic map to the axis $\{x=0\}$.
Since the same argument applies to $G^{-1}$ we conclude that $G$
extends to $\mathcal N$ as a biholomorphism preserving $Z_1$.
\end{proof}

\begin{Remark}\rm
 We observe that, as a consequence of Theorem
\ref{Theorem:desingularization}
and Lemma \ref{Lemma:siegelextension}
the singularity $p\in V$ is \emph{absolutely dicritical} in the
sense that there is a neighborhood $W$ of $p$ in $V$ such that
every leaf of $\fa$ intersecting $W$ contains a separatrix of
$\fa$ through $p$. In other words, for every leaf $L$ of the
restriction $\fa\big|_W$ the union $L\cup\{p\}$ is a separatrix
of $\fa$ through $p$.
\end{Remark}

\section{Basins of attraction of dicritical singularities}
\label{Section:basins}

The main result of this section is the following.

\begin{Theorem}
\label{Theorem:basinprop} Let $\vr$ be a holomorphic action of  $\bc^*$
on a normal Stein  space $V$ of dimension two. If $p\in V$ is a dicritical
singularity of $\fa_\vr$ then the attraction basin of $p$ is $V$. In other
words, every orbit of $\vr$ on $V\setminus \{p\}$ accumulates on $p$.
\end{Theorem}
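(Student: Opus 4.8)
The plan is to show that the attraction basin $B_p$, which by Theorem~\ref{Theorem:conjugacybasins} is an open subset of $V$ biholomorphic to the affine variety $\mathcal V$, is also closed in $V\setminus\{p\}$, hence (by connectedness of $V$) equal to all of $V$. First I would record what is already known: from the previous section $B_p$ is open and the conjugacy $h\colon B_p\to\mathcal V$ carries $\vr$ to the good action $\lambda$; moreover $p\in B_p$ since $B_p$ contains the neighbourhood $U=\rho(\tilde U)$ of $p$. So it suffices to prove that $\partial B_p\cap(V\setminus\{p\})=\varnothing$, i.e. no orbit escapes $B_p$ as $t\to 0$. Equivalently, every orbit of $\vr$ in $V\setminus\{p\}$ must accumulate on $p$.

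The key tool is Suzuki's theorem, already invoked in the excerpt: the foliation $\fa_\vr$ has a meromorphic first integral $F\colon V\dashrightarrow\overline{\mathbb C}$, whose generic fibre is (a piece of) a leaf, i.e. a $\bc^*$-orbit together with its limit points. I would argue as follows. Suppose $q\in V\setminus(\{p\}\cup B_p)$. The orbit $\O_q$ of $q$ is a Riemann surface on which $\bc^*$ acts; since $V$ is Stein, $\O_q$ is not compact, so as $t\to 0$ (or $t\to\infty$) the orbit must accumulate somewhere on $\partial\O_q$. The accumulation set is contained in the singular set of $\fa_\vr$, which by Theorem~\ref{Theorem:main}'s conclusion (really Theorem~\ref{Theorem:desingularization} plus the analysis of $\tilde V$) can only be $\{p\}$: any fixed point of $\vr$ in $V\setminus\{p\}$ would lift to a fixed point of $\tilde\vr$ on $\tilde V\setminus D$, but $\tilde\vr$ extends to $\tilde V$ with all fixed points lying on $D$, and on $\tilde V\setminus D$ the action is free since $\rho$ there is a biholomorphism onto $V\setminus\{p\}$ and $\vr$ has no singularity there other than what maps to $p$. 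Hence $\O_q$ accumulates on $p$. Then, using that near $p$ the action is (by the linear model and Lemma~\ref{Lemma:siegelextension}) conjugate to the good model, the orbit $\O_q$ must eventually enter the neighbourhood $U$ of $p$; but every orbit meeting $U$ lies in $B_p$ by definition, so $q\in B_p$, a contradiction.

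The main obstacle I expect is making rigorous the step ``$\O_q$ accumulates on $p$ $\Rightarrow$ $\O_q$ meets $U$''. A priori the orbit could spiral toward $p$ while staying outside the chosen neighbourhood $U$, or the first integral could have indeterminacy at $p$ forcing several local leaves to behave differently. To handle this I would pass to the resolution $\tilde V$ and work with $\tilde\vr$: the lift $\tilde\O_q$ of the orbit accumulates on the divisor $D=\rho^{-1}(p)$, and using item 2 of Theorem~\ref{Theorem:desingularization} ($\sigma_0$ is the unique dicritical component) together with the local normal forms of $\tilde\vr$ at the simple singularities $\sigma_i\cap\sigma_j$ (Siegel type, so orbits pass by without being trapped), one shows the lifted orbit must cross $\sigma_0$ transversely, i.e. enter $\tilde U$; pushing down gives $\O_q\cap U\ne\varnothing$. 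The point is that the only place a leaf can ``sit on'' the divisor is the dicritical component $\sigma_0$, and leaves through $\sigma_0$ are exactly those in $\tilde B_p$.

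A complementary and perhaps cleaner route, which I would present as the main argument, is topological: the conjugacy identifies $B_p$ with $\mathcal V$, an affine variety, and one checks that the ``boundary at infinity'' of $\mathcal V$ inside any compactification corresponds, under the linear model, to the divisor $D_\infty$, which has a \emph{positive} normal bundle by Lemma~\ref{23.4.07}. A positive-normal-bundle divisor cannot be approached from inside $V$ by orbits that remain in a Stein space, essentially because its neighbourhood is concave; more precisely, if $B_p\subsetneq V$ then $\overline{B_p}\setminus B_p$ would be a nonempty analytic-like set meeting $V\setminus\{p\}$, and transporting the $\bc^*$-action across it would, by the argument used in the proof of Lemma~\ref{Lemma:desingularization} (compact analytic closures of orbits crossing two dicritical-type loci), contradict that $V$ is Stein. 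Either way the conclusion is $B_p=V$, which is the assertion of Theorem~\ref{Theorem:basinprop}.
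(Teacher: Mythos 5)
Your central step---``since $V$ is Stein, the orbit of $q$ is not compact, so as $t\to 0$ or $t\to\infty$ it must accumulate somewhere in $V$''---is a genuine gap, and it is precisely the difficulty the paper's proof is built to handle. A non-compact orbit in a Stein space can be \emph{closed} (properly embedded, escaping to infinity in $V$), in which case its accumulation set in $V$ is empty and your dichotomy never gets started: compare the orbit $\{xy=1\}\subset\bc^2$ of the action $t\cdot(x,y)=(tx,t^{-1}y)$, a properly embedded copy of $\bc^*$ in a Stein surface accumulating nowhere. Ruling out such closed leaves (on $\po B_p$) is exactly the content of the two lemmas the paper devotes to this theorem. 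Lemma~\ref{Lemma:analyticboundary} shows by a period argument---a holomorphic $1$-form on $V$ extending a generator of the first cohomology of a closed boundary leaf $L_0$, combined with finiteness of the holonomy of $L_0$ coming from Suzuki's meromorphic first integral---that $\po B_p$ contains no closed leaf, hence is a union of analytic curves each accumulating at a nondicritical fixed point. Lemma~\ref{Lemma:biholomorphictoplane} then eliminates these remaining boundary leaves by integrating $df/f$ (where $f\in\mathcal O(V)$ cuts out $\ov{L_0}$, using again that $V$ is Stein) over a small loop around $L_0$ pushed into $B_p$; the period is $2\pi\sqrt{-1}$ on one hand and $0$ on the other because $H^1(B_p,\mathbb R)\cong H^1(\mathcal V,\mathbb R)=0$. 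None of this cohomological machinery appears in your proposal, and without it the claim that every orbit accumulates inside $V$ at all is unsupported.

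A second problem is your assertion that the fixed-point set of $\vr$ is $\{p\}$ because ``all fixed points of $\tilde\vr$ lie on $D$'': the resolution $\rho$ is an isomorphism over $V\setminus\{p\}$, so it carries no information about fixed points away from $p$; that $p$ is the only singularity is a \emph{consequence} of the theorem (as remarked after Theorem~\ref{Theorem:main}), not a permissible input. The paper instead allows other (necessarily nondicritical) fixed points on $\po B_p$ and disposes of them in Step~2 of Lemma~\ref{Lemma:analyticboundary} and in Remark~\ref{Remark:uniquedicritical}. Your alternative ``concavity near $D_\infty$'' route is too vague to assess as written; the one concrete ingredient it gestures at, namely $H^1(B_p,\mathbb R)=0$ via the conjugacy $B_p\cong\mathcal V$, is indeed what the paper uses, but only as the final step after the boundary has been reduced to leaves detectable by periods.
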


This theorem will follow from the two lemmas below. A $\bc$-action is of type
$\bc^*$ if its generic orbit is biholomorphic to $\bc^*$.

\begin{Lemma}
\label{Lemma:analyticboundary} Let $\vr$ be a holomorphic $\bc$-action
of type $\bc^*$ on a normal Stein space of dimension two $V$.
Suppose that the set of fixed points of $\vr$ is discrete and
let $p\in V$ be dicritical singularity of $\fa_\vr$. Then the boundary $\po B_{p}$ of the
basin of attraction $B_{p}\subset V $ of $p$ is a
{\rm(}possibly empty{\rm)} union of analytic curves, and each one
of these curves accumulates at a nondicritical singularity of $\vr$.
\end{Lemma}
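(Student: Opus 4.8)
The plan is to study $\partial B_p$ by transporting the problem, via the resolution map $\rho:\tilde V\to V$ of Theorem~\ref{Theorem:desingularization}, to the lifted action $\tilde\vr$ on $\tilde V$, and then to use the structure of a $\bc^*$-action (of type $\bc^*$) with discrete fixed-point set near $\sigma_0$. First I would observe that, by Theorem~\ref{Theorem:conjugacybasins}, $\tilde B_p=\rho^{-1}(B_p)$ contains a full saturated neighborhood of the divisor $D=\rho^{-1}(p)$, so $\partial B_p$ is disjoint from $p$ and $\rho$ restricts to a biholomorphism near it; hence it is equivalent to show $\partial\tilde B_p$ is a union of analytic curves accumulating at nondicritical singularities of $\tilde\vr$. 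Since $B_p$ is open (Theorem~\ref{Theorem:conjugacybasins}) and $\vr$-invariant, $\partial B_p$ is a closed $\vr$-invariant set, and the foliation $\fa_\vr$ is tangent to it in the sense that through every point of $\partial B_p$ the orbit stays in $\partial B_p$; thus $\partial B_p$ is a closed invariant subset of a leaf-saturated set.

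The key step is to show $\partial B_p$ is analytic. Here I would use that the $\bc$-action $\vr$ of type $\bc^*$ is, by Suzuki's theorem (invoked before Theorem~\ref{Theorem:conjugacybasins}), the flow of a complete holomorphic vector field $Z$ on $V$ whose foliation $\fa_\vr$ admits a meromorphic first integral $F$; the generic level sets of $F$ are the orbit closures. For $z\notin B_p$, the orbit of $z$ does not accumulate on $p$, so the orbit closure—a leaf closure, hence an analytic curve given locally by a level of $F$—lies entirely outside $B_p$; conversely any orbit whose closure meets $\overline{B_p}$ but is not contained in $B_p$ must have its closure inside $\partial B_p$. So $\partial B_p$ is, locally away from the fixed points, a union of orbit closures, i.e. an analytic curve. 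To handle the (discrete) set of fixed points that may lie on $\partial B_p$, I would use the local normal form of $\vr$ near a fixed point $q\notin\{p\}$: since $\vr$ is of type $\bc^*$ and $q$ is not dicritical, $Z$ near $q$ is (after resolution of the foliation singularity, which is simple or saddle-node by the analysis in \S\ref{Section:resolution}) conjugate to a linear field $n x\partial_x - m y\partial_y$ with two separatrices, and $B_p$ corresponds to one of the sectors/axes; its boundary near $q$ is then one of the coordinate axes, an analytic curve through $q$. Patching these local analytic pieces gives that $\partial B_p$ is a union of analytic curves.

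It remains to show each such curve accumulates at a nondicritical singularity of $\vr$. A curve component $C\subset\partial B_p$ is a union of orbit closures; if it contained no fixed point of $\vr$ it would be a single non-closed orbit whose closure is all of $C$, and then $C\setminus(\text{one point})$ being an orbit forces $C$ to limit at a fixed point of $\vr$ (a complete $\bc^*$-type orbit on a surface cannot be closed and disjoint from the fixed-point set in a neighborhood of its closure in a Stein surface, by properness/Rossi-type arguments). At such a limiting fixed point $q$, $q$ cannot be dicritical: a dicritical singularity is absolutely dicritical (the Remark after Lemma~\ref{Lemma:siegelextension}), so every leaf through a neighborhood of $q$ is a separatrix accumulating only at $q$, and hence by the same conjugacy-extension argument as in Theorem~\ref{Theorem:conjugacybasins} a neighborhood of $q$ would lie in $B_q$; but then $B_p$ and $B_q$ would be two disjoint open basins whose closures share the curve $C$, which is impossible unless $q=p$—excluded since $\partial B_p$ misses $p$. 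Hence $q$ is nondicritical, which is the assertion.

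\medskip
\noindent\textbf{Main obstacle.} The delicate point is the analyticity of $\partial B_p$ at the boundary fixed points and, more subtly, ruling out that $\partial B_p$ contains a ``thick'' (Levi-flat, real-codimension-one but non-analytic) invariant set: this is exactly where one must exploit the meromorphic first integral from Suzuki's theorem, which forces every leaf closure to be algebraic/analytic and thereby collapses any would-be Levi-flat boundary onto an honest analytic curve. Making that reduction clean — in particular checking that the first integral $F$ extends across the fixed points on $\partial B_p$ so that the level sets there are genuinely analytic — is the step I expect to require the most care.
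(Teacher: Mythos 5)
Your proposal correctly identifies the two main tools (the meromorphic first integral coming from Suzuki's theorem, and the openness of the basin of a dicritical singularity), but it has a genuine gap at the step where you conclude that every boundary curve accumulates at a fixed point. You dismiss the possibility of a closed orbit inside $\po B_{p}$ with the claim that ``a complete $\bc^*$-type orbit on a surface cannot be closed and disjoint from the fixed-point set in a neighborhood of its closure in a Stein surface, by properness/Rossi-type arguments.'' That claim is false as stated: in the Stein surface $\bc^*\times\bc^*$ with the action $t\cdot(x,y)=(tx,y)$ every orbit is closed, biholomorphic to $\bc^*$, and the fixed-point set is empty. Closed $\bc^*$-orbits are perfectly possible on a Stein surface; what must be excluded is that such an orbit lies on $\po B_{p}$, and this is exactly where the paper spends its effort (Step 1 of its proof). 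The paper's argument is homological: if $L_0\subset\po B_{p}$ were closed it would be a cylinder with a generating cycle $\ga$; extending by Cartan's theorem a holomorphic $1$-form on $L_0$ with $\int_\ga\al=1$ to a global $\tilde\al$ on $V$, and using the finiteness of the holonomy of $L_0$ (a consequence of the first integral) to lift a fixed power of $\ga$ to closed cycles on nearby leaves, one finds that leaves $L_z$ arbitrarily close to $L_0$ carry a cycle on which $\tilde\al$ does not vanish, hence have nontrivial homology; but some of these $L_z$ lie in $B_{p}$, where $L_z\cup\{p\}\cong\bc$ has trivial homology. Without an argument of this kind, the assertion that each component of $\po B_{p}$ limits at a fixed point --- and hence the second half of the lemma --- is unproven.

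Two smaller points. First, your argument that the limiting fixed point $q$ is nondicritical ends with ``two disjoint open basins whose closures share the curve $C$, which is impossible'' --- but two disjoint open sets sharing a boundary curve is not in itself impossible; the actual contradiction (as in the paper) is that, since $q\in\po B_{p}$ and $B_q$ contains a neighborhood of $q$, some leaf lies in $B_{p}\cap B_q$ and therefore accumulates at both $p$ and $q$, so its closure is a rational curve inside the Stein space $V$. Second, the issue you flag as the main obstacle (a possible Levi-flat boundary, and analyticity at the boundary fixed points) is real but is not where the difficulty lies: once closed leaves are excluded, the first-integral dichotomy says each boundary leaf accumulates only on the discrete fixed-point set, so its closure is $L\cup\{P\}$, which is an analytic curve (a separatrix), and no local normal-form analysis at $P$ is needed.
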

{\it Proof.} Suppose $\po B_{p}$ is nonempty. Then it is
invariant by $\fa_{\vr} $, i.e.  it is a union of leaves of $\fa_{\vr}$
and fixed points of $\vr$. We divide the argument in two steps.

\vglue.1in \noindent{\bf Step 1}:\, $\po B_{p}$ contains no closed
leaf.
\begin{proof}[Proof of Step 1]
Suppose that $L_0 \subset \po B_{p}$ is a leaf of $\fa_{\vr} $. Since
$\fa_{\vr} $ admits a meromorphic first integral, either $L_0$ is
closed in $V$ or it accumulates only on singular points.
Suppose that $L_0$ is closed in $V$ then it is an analytic smooth
curve in $V$. Since $V$ is Stein there is a holomorphic function
$h\in \mathcal O(V)$ such that  $\{h=0\}=L_0$ in $V$
(\cite{Gunning III}, Theorem 5, p.99). Since $L_0$ is a real
surface diffeomorphic to a cylinder $S^1 \times \re$, we
can take a generator $\ga\colon S^1 \to L_0$ of the homology of
$L_0$ and a holomorphic one-form $\al$ on $L_0$ such that $
\int_\ga \al = 1$. Again because  $V$ is Stein by Cartan's lemma
there is a holomorphic one-form $\tilde\al$ on $V$ which extends
$\al$. Since $\fa_{\vr} $ has a meromorphic first integral on $V$ then
the holonomy of $L_0$ is finite, say of order $n$. Let $\Sigma$ be
a small disc transverse to the leaves of $\fa_{\vr} $ with
$\Sigma\cap L_0 = \{p_0\} \in \ga(S^1)$.
Then there is a fixed power $\ga_{p_0}$ of $\ga$ which has
closed lifts $\tilde\ga_z$ to the leaves $L_z$ of $\fa_{\vr} $ that
contain the points $z \in \Sigma$. Thus, for $z \in \Sigma$ close
enough to $p_0$ we have $\big| \int_{\tilde\ga_z} \tilde\al
-\int_{\ga_{p_0}}\tilde \al \big| < \frac 12$\,, but
$\ga_{p_0} = n\ga$ and, since $\ga \subset L_0$\,, $
\int_{\ga_{p_0}} \tilde\al = n$ so that $ \int_{\tilde\ga_z}
\tilde\al \ne 0$. On the other hand $\tilde\al$ is holomorphic so
that $\tilde\al\big|_{L_z}$ is holomorphic and therefore closed
what implies, since $\tilde\ga_z \subset L_z$ is closed, that
$L_z$ has nontrivial homology and therefore necessarily $\ov L_z
\cong\bc^*$. However, since $L_0 \subset \po B_{p}$ there are
leaves $L_{z}$ of $\fa $ with $z\in \Sigma$ as above and which
satisfy $L_{z} \subset B_{p}$\,. Such a leaf $L_{z}$
accumulates on $p$ and therefore $L_{z} \cup \{p\}$ is a holomorphic
curve biholomorphic to $\bc$ and thus with trivial homology,
yielding a contradiction.
\end{proof}

\noindent{\bf Step 2}:\, $\po B_{p}$ contains no isolated
singularity, thus it is a union of analytic curves. Each
one of these curves contains a nondicritical fixed point
of $\vr$.
\begin{proof}[Proof of Step 2]
If $\po B_{p}$ contains an isolated point $P$ then
$\po B_{p}=\{P\}$ and $V=B_{p}\cup \{P\}$
would be compact contradicting the fact that $V$ is Stein.
On the other hand, by the first step each leaf $L$ contained in
$\po B_{p}$ is not closed in $V$ so that it accumulates at some
fixed point $P$ of $\vr$ and since
$\ov L \supset L \cup \{P\} \simeq \bc^* \cup \{0\} =
\bc$ and $ \ov L$ cannot be compact, it follows that
$\ov L =L \cup \{P\}$ is an analytic curve in $V$ and $L$ accumulates at
no other fixed point of $\vr$.
Finally, we observe that if a leaf $L \subset \po B_{p}$ accumulates at a
fixed point $P$ of $\vr$ then this singularity
is nondicritical: the basin of attraction of a dicritical singularity
in a Stein variety is open and contains an open neighborhood
of the singularity. Since $P \in \po B_{p}$\,, then some
leaf $L_1 \subset B_{p}$ intersects this neighborhood and therefore
$L_1$ accumulates on both  $p$ and $P$. Such a leaf would be
contained in a rational curve in $V$ and this is not possible because
$V$ is Stein. Thus $P$ is nondicritical.
\end{proof}

\begin{Lemma} \label{Lemma:biholomorphictoplane}  Let $\vr$ be
a holomorphic $\bc$-action  of type $\bc^*$ with a discrete set of fixed points
on a  normal Stein space of dimension two $V$  and an
absolutely dicritical singularity at $p\in V$.  Assume that
$H^1(B_{p},\mathbb R)=0$ {\rm(}for instance, if $B_{p}$ is
simply-connected{\rm)}. Then $V = B_{p}$.
\end{Lemma}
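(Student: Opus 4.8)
\medskip

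\noindent\textbf{Proof proposal.} The plan is to show that $B_p$ is closed in $V$; since $B_p$ is open (cf.\ the proof of Lemma~\ref{Lemma:analyticboundary}), nonempty (it contains a neighbourhood of $p$), and $V$ is connected, this yields $V=B_p$. So I would argue by contradiction, assuming $\partial B_p\neq\varnothing$. By Lemma~\ref{Lemma:analyticboundary}, $\partial B_p$ is then a nonempty, locally finite union of analytic curves, and by its proof (Step~2) each such curve has the form $\ov L=L\cup\{P\}$, where $L\cong\bc^*$ is a leaf of $\fa_\vr$ and $P$ is a nondicritical fixed point of $\vr$; moreover $\ov L\cong\bc$ and is closed in $V$. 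Fix one such $\ov L$.

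The contradiction will come from exhibiting a loop in $B_p$ with nonzero period for a closed holomorphic $1$-form, which is incompatible with $H^1(B_p,\mathbb R)=0$. First I would show that $B_p$ ``surrounds'' an arc of $L$. At a regular point $q$ of $\fa_\vr$ on $L$, take a small holomorphic transversal $\Sigma\cong\mathbb D$ with $\Sigma\cap L=\{q\}$. Since $B_p$ is a union of entire leaves and $\partial B_p$ is an analytic curve, $\partial B_p\cap\Sigma$ is an analytic subset of $\Sigma$ with empty interior, hence finite; so $\Sigma\setminus(\partial B_p\cap\Sigma)$ is a punctured disc that splits into the two open sets $B_p\cap\Sigma$ and $(V\setminus\ov{B_p})\cap\Sigma$. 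Choosing $q$ so that the second set does not accumulate on it, a small circle $\delta\subset\Sigma$ around $q$ is a meridian of $\ov L$ lying inside $B_p$. Since $V$ is Stein, by Cartan's Theorem~B there is $f\in\O(V)$, $f\not\equiv0$, vanishing on $\ov L$; choosing $q$ off the finite set $\ov L\cap\overline{(\{f=0\}\setminus\ov L)}$, the form $\tfrac{1}{2\pi i}\,\tfrac{df}{f}$ is a closed holomorphic $1$-form near $q$ off $\ov L$ with $\int_\delta\tfrac{1}{2\pi i}\tfrac{df}{f}=\ord_{\ov L}f\ge 1$. Globalizing this period computation — equivalently, computing the linking of $\delta\subset B_p$ with the $2$-cycle $\ov L\subset V\setminus B_p$ — gives $[\delta]\neq0$ in $H_1(B_p;\mathbb R)$, contradicting the hypothesis; hence $\partial B_p=\varnothing$ and $B_p=V$.

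The main obstacle I anticipate is the ``surrounding'' step: one must rule out that $\ov L$ lies in the common boundary of $B_p$ and a nonempty $\vr$-invariant open set $V\setminus\ov{B_p}$, in which case no such meridian $\delta$ exists. I would treat this by analysing $V\setminus\ov{B_p}$: being open, invariant, and disjoint from $p$, it contains (via Suzuki's meromorphic first integral for $\fa_\vr$) a leaf that is either closed in $V$ and biholomorphic to $\bc^*$ or accumulates only at fixed points of $\vr$; in either case one gets an extra component of $\partial B_p$, or of a fibre of the first integral, to which the same meridian argument applies, forcing a contradiction. A secondary, more routine point is the homological bookkeeping — that the linking of $\delta$ with $\ov L$ is well defined over $\mathbb R$ (using that $\ov L\cong\bc$ is contractible, or that a Stein surface is homotopy equivalent to a $2$-complex) and is unaffected by other zeros of $f$ meeting $B_p$, which can be arranged away by a suitable choice of $f$ or of the transversal $\Sigma$.
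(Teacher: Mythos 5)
Your overall strategy --- a meridian loop in $B_p$ around a boundary leaf, the closed holomorphic form $df/f$ with $\{f=0\}=\ov{L}$, and the contradiction with $H^1(B_{p},\mathbb R)=0$ --- is exactly the paper's. But the step you yourself flag as ``the main obstacle'' is a genuine gap, and your proposed workaround does not close it. You need to know that a small punctured transversal disc around a point of $L$ lies entirely in $B_{p}$, i.e.\ that $V\setminus\ov{B_{p}}=\emptyset$. Your suggestion to ``choose $q$ so that $(V\setminus\ov{B_{p}})\cap\Sigma$ does not accumulate on it'' is unjustified: the set of points of $L$ at which $V\setminus\ov{B_{p}}$ accumulates is closed and $\vr$-invariant, so if it is nonempty it may well be all of $L$. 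The fallback via Suzuki's first integral only produces further leaves bounding $V\setminus\ov{B_{p}}$, to which the very same unresolved difficulty applies; it does not explain why any of them admits a meridian contained in $B_{p}$.

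The paper disposes of this in one stroke \emph{before} any period computation: set $A=V\setminus\po B_{p}$. By Lemma~\ref{Lemma:analyticboundary}, $\po B_{p}$ is a (locally finite) union of analytic curves, hence a thin closed set that does not disconnect the connected normal space $V$; so $A$ is open and connected. Now $B_{p}$ is nonempty, open in $A$, and also closed in $A$ (its closure in $V$ is $B_{p}\cup\po B_{p}$, which meets $A$ exactly in $B_{p}$), whence $B_{p}=A$, i.e.\ $V=B_{p}\cup\po B_{p}$. With this in hand $V\setminus\ov{B_{p}}=\emptyset$, the punctured transversal $\Sigma\setminus(\Sigma\cap\po B_{p})$ is contained in $B_{p}$, and your meridian argument goes through verbatim. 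Note also that since $\{f=0\}=\ov{L}\subset\po B_{p}$ is disjoint from $B_{p}$, the form $df/f$ restricts to a closed holomorphic $1$-form on all of $B_{p}$, so your secondary worries about other zeros of $f$ and linking-number bookkeeping evaporate. Insert the connectedness argument and drop the Suzuki detour, and your proof matches the paper's.
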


\begin{proof}  Let us first prove that $V=B_{p}\cup \po B_{p}$.
Indeed, put $A = V-\po B_{p}$ and $B = B_{p}$\,. Then $A$ is a
connected open subset of $V$ because by
Lemma~\ref{Lemma:analyticboundary}, $\po B_{p}$ is a thin set and
therefore it does not disconnect $V$. Since $B$ is also open and
connected and $\po A = \po B$ it follows that $A=B$ because $B
\subset A$. Therefore $V = B_{p} \cup \po B_{p}$\,. Let us now
prove that
 $\po B_{p} = \emptyset$. Suppose
there is some analytic curve (leaf) $L_0 \subset \po B_{p}$\,.
Again, since  $V$ is Stein there is a holomorphic function $f\in
\mathcal O(V)$ such that  $\{f=0\}=\ov{L_0}$ in $V$ (\cite{Gunning
III}, Theorem 5, p.99). Define the meromorphic one-form $\al =
\frac{df}{f}$ on $V$, the polar set of $\alpha$ is $\ov{L_0}$.
Given a disc $\Sigma \cong \bd$ transverse to the leaves of
$\fa_{\vr} $
with $\Sigma\cap L_0 = \{p_0\}$ we consider a simple loop $\ga\colon
S^1 \to\Sigma$ around $p_0 \in \Sigma$ such that $ \int_\ga \al =
2\pi\sqrt{-1}$.
 We can assume that $\ga(S^1) \subset B_{p}$ because
$\Sigma\setminus (\Sigma \cap \po B_{p})\subset B_{p}$ and $\po
B_{p}$ is thin. Since by hypothesis $H^1(B_{p},\mathbb R)=0$ it
follows that  $ \int_\ga \al = 0$ yielding a contradiction.
Therefore $\partial B_p$ contains no leaf. Thus $V = B_{p}$ and
the Lemma is proved.
\end{proof}

\begin{proof}[Proof of Theorem~\ref{Theorem:basinprop}]
Since $V$ is Stein, all regular leaves of $\fa_\vr$
are biholomorphic to $\bc^*$. By Proposition 5 and Theorem 2 of \cite{Suzuki} and the fact that
$\fa_\vr$ has a dicritical singularity, $\vr$
has isolated fixed points. In view of
Lemmas~\ref{Lemma:analyticboundary} and
\ref{Lemma:biholomorphictoplane}
it is enough to observe that $H^1(B_{p},\mathbb R)=0$.
This is clear since  the basin $B_p$ of
the action $\vr$  is biholomorphic to the basin
$\mathcal V$ of the linear periodic flow $\lambda$ on $\mathcal
V$, and moreover $H^1(\mathcal V,\mathbb R)=0$.
\end{proof}

\begin{Remark}
\label{Remark:uniquedicritical} {\rm The proof of
Theorem~\ref{Theorem:basinprop} also shows that
$V\setminus\{p\}$ contains no singularity of $\vr$ which is
dicritical as a singularity of $\fa_\vr$.}
\end{Remark}

\begin{proof}
Suppose by contradiction that  $q$ is a dicritical singularity of
$\vr$ then we consider the attraction basin $B_q$ of $q$  and
proceeding as for $p$ we prove that $V\setminus\partial B_q= B_q$.
On the other hand, since $\partial B_q$ is a thin set we have that
$V\setminus \partial B_q$ is connected. Clearly we have $\partial
B_p \cap B_q = \emptyset$ because otherwise, since $B_q$ is open,
there would be orbits contained in $B_p$ and $B_q$, which is not
possible because these orbits would be contained in rational curves.
Analogously we have $\partial B_q \cap B_p = \emptyset$. Thus, the
only possibility is to have  $\partial B_p=\partial B_q$.
Therefore, $V\setminus \partial B_p=V\setminus \partial B_q$, i.e,
$B_p=B_q$ and this gives a contradiction.

\end{proof}

\section{Proof of the main theorems}
\label{Section:proofmain}
\begin{proof}[Proof of Theorem \ref{Theorem:main}]
Let us be given a pair $(V,\vr)$ as in Theorem \ref{Theorem:main}.
By Theorem \ref{Theorem:basinprop}  the basin of attraction of
the dicritical singularity $p$ is the whole space $V$, i.e $B_{p}= V$.
By Theorem \ref{Theorem:conjugacybasins} there is a biholomorphic
conjugacy $h\colon V \to \mathcal V$ between $\vr$ and $\lambda$.
Finally, by Proposition \ref{affine} the variety
$\mathcal V$ is affine and the the action $\lambda$ of $\bc^*$
on $\mathcal V$ in some affine coordinates is good.
Note that our proof gives an alternative proof of
Proposition 1.1.3, page 207 in \cite{Orlik}.
\end{proof}

\begin{proof}[Proof of Theorem \ref{Theorem:main2}]
Each data in Theorem \ref{Theorem:main2} gives us a linear model variety
$(\mathcal{V},\lambda)$ and in  a similar way as in Theorem
\ref{Theorem:conjugacybasins} we can prove that two linear models are
biholomorphic if and only their correspondings data are the same.
On the other hand, we have proved that each pair $(V,\vr)$  as in Theorem
\ref{Theorem:main}  is biholomorphic to
a linear model variety $(\mathcal V,\lambda)$.
\end{proof}

\begin{proof}[Proof of Corollary~\ref{Corollary:smooth}]
Since $V$ is smooth, the resolution process of $p\in V$ is the
blow-up resolution (\cite{Seidenberg}) for the foliation $\fa_\vr$
at $p$. In particular, $\sigma_0$ is a negatively embedded
projective line. Thus, by Theorem~\ref{Theorem:main} there is a
good action $\sigma_Q$ on $\mathcal V\subset \bc^{n+1}$ equivalent
to $\vr$ on $V$. Thus $\mathcal V$ is a quasi-homogeneous
non-singular algebraic surface on $\bc^{n+1}$ and therefore it is
a graph, hence equivalent to an affine plane by algebraic change
of coordinates.
\end{proof}

\begin{Remark}
\label{Remark:quasihomogeneous} {\rm A {\it quasi-homogeneous
surface singularity} (see for instance \cite{Seade} Chapter III,
page 67) is a $2$-dimensional analytic variety $V\subset \bc^m$
with an isolated singularity at $0\in \bc^m$ supporting a
$\bc^*$-action $\vr$ which is {\it good} in the sense that every
non-singular orbit accumulates (only) at $0\in\bc^m$.
As a consequence of our Theorem~\ref{Theorem:main} we obtain that
if $V$ is a two-dimensional Stein space with a $\bc^*$-action
having a dicritical singularity at $p\in V$ then $p\in V$ is a
quasi-homogeneous surface singularity. }
\end{Remark}
\bibliographystyle{amsalpha}

\begin{tabular}{ll}
C. Camacho, H. Movasati  & \qquad  B. Sc\'ardua\\
IMPA-Estrada D. Castorina, 110 & \qquad Instituto de Matem\'atica\\
Jardim Bot\^anico  & \qquad Universidade Federal do Rio de Janeiro\\
Rio de Janeiro - RJ  & \qquad  Caixa Postal 68530\\
CEP. 22460-320   & \qquad 21.945-970 Rio de Janeiro-RJ\\
BRAZIL &  \qquad BRAZIL
\end{tabular}

\end{document}